\documentclass[11pt]{article}

\usepackage[margin=1in]{geometry}
\usepackage{amsmath,amssymb,amsthm,mathtools}
\usepackage{booktabs}
\usepackage{array}
\usepackage{tabularx}
\usepackage{xcolor}
\usepackage[expansion=false]{microtype}
\usepackage[hidelinks]{hyperref}
\hypersetup{
  pdftitle={Airy Turning-Point Asymptotics for Ramanujan's Entire Function Aq(z)},
  pdfauthor={Yu-Tian Li},
  pdfsubject={Turning-point asymptotics for Ramanujan's entire function},
  pdfkeywords={Ramanujan's entire function, basic hypergeometric functions,
    Airy turning-point asymptotics, coalescing saddle points,
    q-difference equations, zeros}
}

\newcommand{\Aq}{A_q}
\newcommand{\Ai}{\operatorname{Ai}}
\newcommand{\Bi}{\operatorname{Bi}}
\newcommand{\Aiq}{\operatorname{Ai}_q}
\newcommand{\eps}{\varepsilon}
\newcommand{\dd}{\,\mathrm{d}}
\newcommand{\Log}{\operatorname{Log}}
\newcommand{\Arg}{\operatorname{Arg}}
\newcommand{\C}{\mathbb C}

\newcommand{\Ue}{\mathcal U_\eps}

\newcommand{\phioneone}{{}_1\phi_1}

\newtheorem{theorem}{Theorem}
\newtheorem{proposition}[theorem]{Proposition}
\newtheorem{lemma}[theorem]{Lemma}
\newtheorem{corollary}[theorem]{Corollary}
\newtheorem{remark}{Remark}

\title{\textbf{Airy Turning-Point Asymptotics for
Ramanujan's Entire Function \(A_q(z)\)}}
\author{Yu-Tian Li\\[0.35em]
\small School of Mathematics and Statistics, Nanfang College,\\
\small Guangzhou 510970, Guangdong, China\\
\small\href{mailto:yutianlee@gmail.com}{\texttt{yutianlee@gmail.com}}\\
\small ORCID:
\href{https://orcid.org/0000-0003-1810-3000}
{\texttt{0000-0003-1810-3000}}}
\date{}

\begin{document}

\maketitle

\begin{abstract}
Let
\[
 A_q(z)=\sum_{k=0}^{\infty}\frac{q^{k^2}}{(q;q)_k}(-z)^k
\]
be Ramanujan's entire function.  We study it in the turning-point
scaling
\[
q=e^{-\eps},\qquad
z=\frac{\sqrt q}{4}e^{-\eps^{2/3}\zeta},
\]
with \(\zeta\) in a compact subset of \(\C\).  After an explicit
exponential normalization, a direct coalescing-saddle analysis gives,
uniformly on compact subsets, the expansion
\[
 \Ai(\zeta)
 +\frac{\eps^{2/3}}{30}
 \left(4\zeta\Ai(\zeta)+\zeta^2\Ai'(\zeta)\right)
 +O_K(\eps).
\]
Thus the first correction is explicit and comes with a quantitative
remainder.  For every fixed \(n\), the same analysis locates the
positive zero associated with the \(n\)-th Airy zero and proves that it
is globally the \(n\)-th positive zero of \(A_q\).  Expanding the
normalized \(q\)-difference equation recovers the Airy differential
equation and confirms the scaling.
An appendix records Morita's antisymmetric companion, whose normalized
limit is \(\Bi\), together with a single-valued meromorphic descent of
it.  Numerical tables illustrate the normalization, the correction,
and the zero formulas.
\end{abstract}

\medskip
\noindent\textit{2020 Mathematics Subject Classification.}
Primary 33D15; Secondary 33C10, 39A13, 41A60.

\smallskip
\noindent\textit{Keywords.}
Ramanujan's entire function; basic hypergeometric functions; Airy
turning-point asymptotics; coalescing saddle points; \(q\)-difference
equations; zeros.

\section{Introduction}

Ramanujan's function originated not as an Airy analogue, but as a
two-variable extension of the Rogers--Ramanujan series.  An equivalent
series occurs in the lost notebook \cite[p.~57]{Ramanujan1988},
together with a remarkable product expansion later proved by Andrews
using Stieltjes--Wigert polynomials \cite{Andrews2005}.  With the sign
convention used in this paper, its special values
\[
 A_q(-1)=\sum_{k\geq0}\frac{q^{k^2}}{(q;q)_k},
 \qquad
 A_q(-q)=\sum_{k\geq0}\frac{q^{k^2+k}}{(q;q)_k}
\]
are the two Rogers--Ramanujan series, whence the alternative name
``the entire Rogers--Ramanujan function.''  Both the notation \(A_q\)
and the reading of it as a \(q\)-analogue of the Airy function came
later.

Ismail described the function emerging from his
Plancherel--Rotach asymptotics as a \(q\)-analogue of the Airy
function \cite{Ismail2005}: it is the transition function for
\(q\)-orthogonal polynomials with unbounded recurrence coefficients.
In that fixed-\(q\), large-degree setting \(A_q\) plays the part of
the classical Airy function at a soft edge, and it is this asymptotic
role, not a \(q\uparrow1\) confluence, that the terminology
originally recorded.  For the general theory of \(q\)-orthogonal
polynomials and their place in the Askey scheme we refer to Ismail's
monograph \cite{IsmailBook2005}.

The function has since appeared in the study of the zeros of entire
\(q\)-functions \cite{IsmailCZhang2007}, fixed-\(q\) theta-function
and Plancherel--Rotach asymptotics
\cite{ZhangTheta,ZhangPR2008}, scaled \(q\uparrow1\) asymptotics
\cite{ZhangScaled,ZhangFunctions2012}, and the connection and Stokes
theory of the Ramanujan \(q\)-difference equation
\cite{Morita2011,Morita2012,Morita2014}.  It is also
connected with the distinct Kajiwara \(q\)-Airy function arising in
\(q\)-Painlev\'e theory \cite{Morita2011}, and it has motivated
broader families of Ramanujan-type entire functions
\cite{DaiIsmailWang2020}.  The function thus belongs to several
traditions at once: Rogers--Ramanujan identities, \(q\)-orthogonal
polynomials, entire function theory, singular asymptotics.  Our aim
here is to analyze its classical Airy behavior in the turning-point
scaling \(q\uparrow1\).

Our main result is a direct saddle expansion of \(A_q\) itself,
including the explicit first correction with an \(O_K(\eps)\)
remainder.  It yields the displacement of each Airy-matched zero and
proves that, for every fixed \(n\), this zero is globally the \(n\)-th
positive zero.  The more elaborate antisymmetric \(\Bi\)-companion is
kept out of the main line of argument and placed in
\ref{app:Bq-companion}.

\begin{remark}[Consistency with the Kajiwara \(q\)-Airy limit]
The leading \(\Ai\)-term can also be recovered by combining the
Kajiwara \(q\)-Airy asymptotics of Hamamoto--Kajiwara--Witte
\cite{HKW2006} with Morita's connection formula \cite{Morita2011} and
taking the symmetric Airy combination.  This provides an independent
check on the normalization and limiting function; the refined
expansion and zero results below come from the direct saddle analysis.
\end{remark}

\section{Definitions, scaling, and main results}
\label{sec:definitions}

Throughout, \(0<q<1\) and
\[
 (a;q)_n=\prod_{j=0}^{n-1}(1-aq^j),\qquad
 (a;q)_\infty=\prod_{j=0}^{\infty}(1-aq^j).
\]
Ramanujan's entire function is
\begin{equation}
 \Aq(z)=\sum_{k=0}^{\infty}
 \frac{q^{k^2}}{(q;q)_k}(-z)^k,
 \label{eq:Aq-def}
\end{equation}
which satisfies
\begin{equation}
 qzY(q^2z)-Y(qz)+Y(z)=0.
 \label{eq:Ramanujan-equation}
\end{equation}
Put
\[
 \eps=\log(1/q),\qquad
 \eta=\eps^{1/3},\qquad L=\log4,
\]
and introduce the turning-point scaling
\begin{equation}
 z_\eps(\zeta)
 =\frac{e^{-\eps/2}}4e^{-\eps^{2/3}\zeta}
 =\frac{\sqrt q}{4}e^{-\eta^2\zeta}.
 \label{eq:turning-scaling}
\end{equation}
The normalization used below is
\begin{equation}
 N_\eps(\zeta)
 :=
 \frac{\eps^{1/6}}{2\sqrt\pi}
 \exp\!\left\{
 \frac{\pi^2-3L^2}{12\eps}
 -\frac{L\zeta}{2\eps^{1/3}}
 -\frac{\eps^{1/3}\zeta^2}{4}
 \right\},
 \qquad
 \Ue(\zeta):=N_\eps(\zeta)\Aq(z_\eps(\zeta)).
 \label{eq:normalization}
\end{equation}
We use the standard contour representation
\begin{equation}
 \Ai(\zeta)
 =
 \frac1{2\pi i}
 \int_{\Gamma_{\rm Ai}}
 e^{u^3/3-\zeta u}\,\dd u,
 \label{eq:airy-contour}
\end{equation}
where \(\Gamma_{\rm Ai}\) is oriented from infinity on
\(\arg u=-\pi/3\) to infinity on \(\arg u=\pi/3\).

\subsection{Main results}
\label{sec:present-result}

The coalescing-saddle analysis of Section~\ref{sec:main-proof} yields
the following.

\begin{theorem}[Turning-point expansion]
\label{prop:main-expansion}
For every compact \(K\subset\C\), uniformly for \(\zeta\in K\) as
\(\eps\downarrow0\),
\begin{equation}
 \Ue(\zeta)
 =
 \Ai(\zeta)
 +\frac{\eps^{2/3}}{30}
 \left[
 4\zeta\Ai(\zeta)+\zeta^2\Ai'(\zeta)
 \right]
 +O_K(\eps).
 \label{eq:main-expansion}
\end{equation}
In particular,
\[
 \sup_{\zeta\in K}
 |\Ue(\zeta)-\Ai(\zeta)|
 \le C_K\eps^{2/3}
\]
for all sufficiently small \(\eps>0\).

Equivalently, in an explicit \(q\)-only form, locally uniformly for
\(\zeta\in\C\) as \(q\uparrow1\),
\begin{equation}
\begin{aligned}
&\frac{\bigl(\log(1/q)\bigr)^{1/6}}{2\sqrt{\pi}}
\exp\!\left\{
\frac{\pi^2-3(\log4)^2}{12\log(1/q)}
-\frac{(\log4)\zeta}{2\bigl(\log(1/q)\bigr)^{1/3}}
-\frac{\zeta^2\bigl(\log(1/q)\bigr)^{1/3}}4
\right\}\\
&\qquad{}\times
A_q\!\left(
\frac{\sqrt q}{4}
\exp\!\left\{-\zeta\bigl(\log(1/q)\bigr)^{2/3}\right\}
\right)
\longrightarrow \Ai(\zeta).
\end{aligned}
\label{eq:q-only-airy-limit}
\end{equation}
\end{theorem}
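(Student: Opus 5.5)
The plan is to start from an exact integral representation of \(\Aq\) in which \(\eps\) appears only as a large parameter. Then we carry out a uniform coalescing-saddle (Chester--Friedman--Ursell type) analysis near the double saddle that appears exactly at \(z=\tfrac14\).

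\textbf{Step 1: integral representation.} Write \(q^{k^2}=e^{-\eps k^2}=(4\pi\eps)^{-1/2}\int_{\mathbb R}e^{-x^2/(4\eps)+ikx}\dd x\). Insert this into \eqref{eq:Aq-def} and sum the series with Euler's identity \(\sum_k w^k/(q;q)_k=1/(w;q)_\infty\). This gives
\[
\Aq(z)=\frac{1}{\sqrt{4\pi\eps}}\int_{\mathbb R}\frac{e^{-x^2/(4\eps)}}{(-ze^{ix};q)_\infty}\dd x .
\]
For \(z\) near \(\tfrac14\), the poles \(ze^{ix}=-q^{-j}\) lie off the real axis, and the interchange of sum and integral is justified by absolute convergence. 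The exponent rotates naturally, so I substitute \(x=iy\) and shift the contour to a line \(\operatorname{Im}x=\text{const}\) passing through the relevant saddle.

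\textbf{Step 2: uniform expansion of the integrand.} By Euler--Maclaurin (or the Mellin transform of \(\log(w;q)_\infty\)) one has
\[
-\log(w;q)_\infty=\eps^{-1}\operatorname{Li}_2(w)+\tfrac12\log(1-w)-\tfrac{\eps}{12}\tfrac{w}{1-w}+O(\eps^2),
\]
uniformly for \(w\) in compacts avoiding \([1,\infty)\). The phase is then \(\eps^{-1}f(x)\) with \(f(x)=-x^2/4+\operatorname{Li}_2(-ze^{ix})\), and \(f'(x)=-x/2-i\log(1+ze^{ix})\). With \(x=iy\) and \(t=e^{-y/2}\), the saddle equation becomes \(zt^2-t+1=0\). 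Its two roots coalesce at \(t=2\) exactly when \(z=\tfrac14\), which explains the scaling \eqref{eq:turning-scaling}.

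Evaluating \(\eps^{-1}f\) at the double point reproduces the constant \((\pi^2-3L^2)/(12\eps)\). Here one uses \(\operatorname{Li}_2(-1)=-\pi^2/12\) together with \(y=-2L\) at the double point. The factor \(e^{-\eta^2\zeta}\) in \(z\) contributes the linear term \(-L\zeta/(2\eps^{1/3})\) and the quadratic term \(-\eps^{1/3}\zeta^2/4\) in \(N_\eps\). The prefactor \(\eps^{1/6}/(2\sqrt\pi)\) is explained as follows: the factor \(\eps^{1/3}\) comes from the cubic scaling of the local variable \(u\), and the remaining constants come from \((4\pi\eps)^{-1/2}\) together with \((1-w)^{1/2}\) at \(w=-ze^{ix}\).

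\textbf{Step 3: local Airy reduction and correction term.} Set \(x=x_*+c\,\eta\,u\), with \(c\) chosen so that the cubic term of \(f\) becomes \(u^3/3\). Expanding everything in powers of \(\eta\) gives an integrand of the form
\[
e^{u^3/3-\zeta u}\bigl(1+\eta^2 P(u,\zeta)+O(\eta^3)\bigr).
\]
Here \(P\) is a polynomial collecting three contributions: the quartic term of \(f\), the next order of the \(\zeta\)-dependence of the phase, and the linear term of the \(\tfrac12\log(1-w)\) amplitude. The deformed contour is mapped onto \(\Gamma_{\rm Ai}\) as in \eqref{eq:airy-contour}. Each monomial \(u^m e^{u^3/3-\zeta u}\) integrates to \((-\partial_\zeta)^m\Ai\), and the Airy equation \(\Ai''=\zeta\Ai\) reduces the result to the combination \(4\zeta\Ai+\zeta^2\Ai'\) with coefficient \(1/30\). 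The same substitution also yields the \(q\)-only form \eqref{eq:q-only-airy-limit}.

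\textbf{Main obstacle: uniform error control.} The hard part is making these estimates uniform in \(\zeta\in K\). This requires three things:
\begin{itemize}
\item tail bounds on the shifted contour away from the saddle, where one needs \(\operatorname{Re}f\) to be strictly smaller than at the saddle, checked on all of \(\mathbb R\) modulo the \(2\pi\)-periodicity of \(e^{ix}\);
\item a justification that the pole-free strip allows the shift;
\item control of the \(O(\eta^3)=O(\eps)\) Taylor remainder on \(|u|\le\eta^{-\delta}\).
\end{itemize}
I would first try a direct Taylor expansion with explicit remainder on the scaled neighbourhood, combined with a crude monotonicity bound for \(\operatorname{Re}f\) outside it. I would avoid a full Chester--Friedman--Ursell cubic change of variables, since only the first correction is required.
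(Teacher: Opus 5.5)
Your overall architecture (the Gaussian representation, Euler--Maclaurin for the reciprocal product, the double saddle at $z=1/4$, local Airy reduction, and the derivative identities for $\Ai$) matches the paper. The contour in Step~1, however, fails.

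\textbf{The horizontal line is a level line.} The line $\operatorname{Im}x=-L$ through the double point $x=-iL$ is not a descent path. At $z=1/4$, with $x=-iL+s$ and $|s|<\pi$, one has $\operatorname{Re}\operatorname{Li}_2(-e^{is})=s^2/4-\pi^2/12$ and $\operatorname{Re}(-x^2/4)=(L^2-s^2)/4$. Hence $\operatorname{Re}f\equiv(3L^2-\pi^2)/12$ along the entire central segment. There is no localization, and the tail bound you plan to prove (``$\operatorname{Re}f$ strictly smaller than at the saddle'') is false there.

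\textbf{Its local image is not $\Gamma_{\rm Ai}$.} The change $x=-iL+2i\eta u$ sends this line to the imaginary $u$-axis. On that axis $|e^{u^3/3}|=1$, so the polynomial corrections are not absolutely integrable. For $\operatorname{Im}\zeta\neq0$ the factor $e^{-\zeta u}$ even grows exponentially in one direction.

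\textbf{There is no pole-free strip.} The first row of poles sits at $\operatorname{Im}x=\operatorname{Re}\Log z_\eps=-L-\eta^2\operatorname{Re}\zeta-\eta^3/2$, within $O(\eta^2)$ of your line. In the limit the line runs into the singular point $-ze^{ix}=1$ at $s=\pm\pi$, where your Euler--Maclaurin expansion is not uniform. These poles lie strictly between the line and the real axis whenever $\operatorname{Re}\zeta<-\eta/2$. The shift then picks up residues at $\pm\pi+i\Log z_\eps$. After multiplication by $N_\eps$, each has size about $\eps^{1/6}/(2\sqrt\pi)$ for real $\zeta$, and one of them is exponentially large in $1/\eta$ when $\operatorname{Im}\zeta\neq0$. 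Neither is negligible against an $O(\eps^{2/3})$ correction.

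\textbf{The missing idea is a bent contour.} First restrict to the central period $[-\pi,\pi]$; the periods $m\neq0$ are exponentially small after normalization. Then replace $[-\sqrt3L,\sqrt3L]$ by the two segments joining $\mp\sqrt3L$ to $-iL$, which leave $-iL$ at angles $5\pi/6$ and $\pi/6$. This contour has three properties you need:
\begin{itemize}
\item under $x=-iL+2i\eta u$ the two segments are exactly the Airy rays $\arg u=\pm\pi/3$;
\item the closed triangle is pole-free, because the poles have real parts within $O(\eta^2)$ of odd multiples of $\pi$ while $\sqrt3L<\pi$;
\item the contour stays a fixed distance from the cut of $\operatorname{Li}_2$, so the product expansion is uniform on it.
\end{itemize}
You still need a genuine descent estimate on the rays, $\operatorname{Re}\bigl(f(x)-f(-iL)\bigr)\le-c\rho^3+C_K\eta^2\rho$. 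The paper derives it from $F'(w)=-\Log\cosh(w/2)$ and the inequality $\sinh\alpha<\sin(\sqrt3\alpha)$ on $(0,\log2]$. You also need a fixed negative gap on $[\sqrt3L,\pi]$.

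\textbf{Smaller points.}
\begin{itemize}
\item The double point is at $y=-L$, not $-2L$.
\item $\operatorname{Li}_2(-e^W)$ has no $W^4$ term, so the $-u^5/30$ in the correction comes from the quintic term, not a quartic one.
\item If you centre at the fixed point $-iL$, the density has a pointwise nonzero order-$\eta$ term $\frac{\zeta}{2}(u^2-\zeta)$. It is removed only after integration, by $\Ai''=\zeta\Ai$, and its square enters the $\eta^2$ coefficient. Your form $1+\eta^2P+O(\eta^3)$ does hold if you centre instead at $i\Log z_\eps$, where $-ze^{ix}=-1$. Then $P=\zeta/4-u^2/4-u^5/30$, which integrates to the stated correction.
\end{itemize}
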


\begin{corollary}[Fixed-index positive zeros]
\label{cor:zeros}
Let \(a_1>a_2>\cdots\) be the negative zeros of \(\Ai\).  The existence
of an infinite increasing sequence of positive zeros of \(A_q\) is
established in \cite{IsmailCZhang2007}; let \(\rho_n(q)\) denote its
\(n\)-th member.  For every fixed \(n\),
\begin{equation}
 \zeta_{n,\eps}
 :=
 -\eps^{-2/3}
 \log\!\left(\frac{4\rho_n(e^{-\eps})}{\sqrt q}\right)
 =
 a_n-\frac{a_n^2}{30}\eps^{2/3}+O_n(\eps),
 \label{eq:scaled-zero}
\end{equation}
and
\begin{align}
 \rho_n(e^{-\eps})
 &=
 \frac14
 \exp\!\left(
 -a_n\eps^{2/3}-\frac{\eps}{2}
 +\frac{a_n^2}{30}\eps^{4/3}
 +O_n(\eps^{5/3})
 \right),
 \label{eq:zero-exp-form}\\
 &=
 \frac14\left(
 1-a_n\eps^{2/3}-\frac{\eps}{2}
 +\frac{8a_n^2}{15}\eps^{4/3}
 +O_n(\eps^{5/3})
 \right).
 \label{eq:zero-power-form}
\end{align}
In particular,
\begin{equation}
 \rho_n(e^{-\eps})\longrightarrow\frac14,
 \qquad
 \frac{4\rho_n(e^{-\eps})-1}{\eps^{2/3}}
 \longrightarrow-a_n.
 \label{eq:zero-limits}
\end{equation}
For each fixed \(n\),
\begin{equation}
 \frac{\rho_{n+1}(e^{-\eps})-\rho_n(e^{-\eps})}
 {\eps^{2/3}}
 \longrightarrow\frac{a_n-a_{n+1}}4.
 \label{eq:zero-gap-limit}
\end{equation}
The estimates are uniform for \(n\) in any fixed finite set; the
regime \(n=n(\eps)\to\infty\) is not covered.
\end{corollary}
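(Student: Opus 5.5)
The statement to prove is Corollary~\ref{cor:zeros}. The plan has three parts: local zeros near each Airy zero from Theorem~\ref{prop:main-expansion} via Hurwitz/Rouché, the explicit zero expansions, and a global counting argument identifying the Airy-matched zero with \(\rho_n(q)\).

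\emph{Local existence and expansion.} Since \(N_\eps(\zeta)\neq0\), the zeros of \(\Ue\) in \(\zeta\) correspond exactly to zeros of \(A_q\) at \(z=z_\eps(\zeta)\). Because \(\zeta\mapsto z_\eps(\zeta)\) maps real \(\zeta\) onto positive \(z\), real zeros of \(\Ue\) give positive zeros of \(A_q\).

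Fix \(n\) and a small disc \(D\) around \(a_n\) containing no other zero of \(\Ai\). The uniform bound \(|\Ue-\Ai|\le C\eps^{2/3}\) on \(\overline D\), together with Rouché, gives exactly one zero \(\zeta_{n,\eps}\) of \(\Ue\) in \(D\). This zero is real: \(A_q\) has real coefficients, so \(\Ue\) is real on \(\mathbb R\), and nonreal zeros would come in conjugate pairs.

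To get the refined location, write \(\zeta_{n,\eps}=a_n+\delta\). Expanding \eqref{eq:main-expansion} with \(\Ai(a_n)=0\) gives
\[
0=\Ai'(a_n)\delta+O(\delta^2)+\tfrac{\eps^{2/3}}{30}\,a_n^2\Ai'(a_n)+O(\eps^{2/3}\delta)+O(\eps).
\]
Since \(\Ai'(a_n)\neq0\) and \(\delta=O(\eps^{2/3})\) by Rouché, this yields \(\delta=-a_n^2\eps^{2/3}/30+O(\eps)\). The error term stays \(O(\eps)\) even after differentiating, by Cauchy estimates applied to the analytic remainder on a slightly larger disc.

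\emph{Formulas \eqref{eq:zero-exp-form}--\eqref{eq:zero-gap-limit}.} Invert the scaling: \(\rho=\tfrac14 e^{-\eps/2}e^{-\eps^{2/3}\zeta_{n,\eps}}\). Substituting the expansion of \(\zeta_{n,\eps}\) gives the exponent
\[
-a_n\eps^{2/3}-\eps/2+a_n^2\eps^{4/3}/30+O(\eps^{5/3}).
\]
Taylor-expanding the exponential gives the \(\eps^{4/3}\) coefficient \(a_n^2/2+a_n^2/30=8a_n^2/15\), which is \eqref{eq:zero-power-form}. The limits \eqref{eq:zero-limits} follow immediately, and so does the gap limit \eqref{eq:zero-gap-limit}, once the Airy-matched zero is identified with \(\rho_n\).

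\emph{Global identification (main obstacle).} It remains to show that the zero matched to \(a_n\) is the \(n\)-th positive zero of \(A_q\). This means there are no further positive zeros of \(A_q\) in \((0,\rho_n)\) other than the ones matched to \(a_1,\dots,a_{n-1}\). Locally, the Hurwitz argument on a compact real interval \([a_{n}-1,M]\) shows that \(\Ue\) has exactly \(n\) real zeros there for small \(\eps\). Note that \(\zeta\le M\) corresponds to \(z\ge \tfrac{\sqrt q}{4}e^{-M\eps^{2/3}}\).

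The remaining task is to exclude zeros of \(A_q\) on \(0<z\le \tfrac{\sqrt q}{4}e^{-M\eps^{2/3}}\). I would try to show \(A_q(z)>0\) there in two steps.
\begin{itemize}
\item For \(z\le1/4-c\) (fixed \(c>0\)), use the saddle/integral representation from Section~\ref{sec:main-proof}. Off the turning point it has a single real dominant saddle with positive leading term and uniformly small relative error. Alternatively, use the known fact (Ismail--Zhang) that the smallest positive zero of \(A_q\) tends to \(1/4\) as \(q\uparrow1\).
\item On the intermediate range \(\zeta\in[M,\eps^{-2/3}c']\), compare with the Airy exponential decay \(\Ai(\zeta)>0\). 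This uses the uniform large-\(\zeta\) (Olver-type) form of the same saddle analysis: for \(\zeta\ge M\) the two saddles are real and separated, and the contribution is positive.
\end{itemize}
I expect this matching between the compact \(\zeta\)-region and the fixed-\(z\) region to be the hardest step. It needs positivity of the saddle contribution uniformly along the entire real ray, not just on compacts.
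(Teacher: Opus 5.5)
Your local analysis matches the paper: Rouch\'e on a conjugation-invariant disc, reality of the zero, the displacement \(-a_n^2\eps^{2/3}/30\), and the coefficient \(a_n^2/2+a_n^2/30=8a_n^2/15\). The gap is the step you flag yourself, identifying the Airy-matched zero with \(\rho_n(q)\). What you give there is a program, not an argument.

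Theorem~\ref{prop:main-expansion} is uniform only on fixed compact \(\zeta\)-sets, so it says nothing about \(\zeta\in[M,c'\eps^{-2/3}]\). Neither of your two substitutes is established. The ``Olver-type'' uniform large-\(\zeta\) expansion is not proved anywhere. Nor is the separate non-turning-point saddle analysis for \(z\le 1/4-c\), which would need its own contour, descent estimates and positivity, uniformly in \(z\) down to \(0\).

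The fallback, that the smallest positive zero tends to \(1/4\), is not available either. The Ismail--Zhang results concern fixed \(q\) and \(n\to\infty\), and \(\rho_1(e^{-\eps})\to1/4\) is part of what this corollary asserts. Even granted, it would only cover \(z\le1/4-c\), not the intermediate range. Without excluding positive zeros with \(\zeta>M\), you cannot conclude \(z_{n,\eps}=\rho_n\), and the gap limit \eqref{eq:zero-gap-limit} falls with it.

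The missing idea is that no asymptotics are needed below the turning point. The \(q\)-difference equation gives an exact positivity statement for every \(0<q<1\): \(A_q(z)>0\) for \(0\le z\le 1/(4q)\).

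\begin{itemize}
\item Put \(y_k=A_q(q^kz)\). Then \eqref{eq:Ramanujan-equation} reads \(y_k=y_{k+1}-\alpha_ky_{k+2}\), with \(0\le\alpha_k=q^{k+1}z\le1/4\).
\item Since \(y_k\to1\), the ratios \(s_k=y_k/y_{k+1}\) lie in \([1/2,1]\) for large \(k\).
\item The relation \(s_k=1-\alpha_k/s_{k+1}\in[1-2\alpha_k,1]\subset[1/2,1]\) propagates this backward to \(k=0\), giving \(y_0>0\).
\end{itemize}

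Since \(z=1/(4q)\) corresponds to \(\zeta=-\tfrac32\eta\), every real \(\zeta\ge-\tfrac32\eta\) is zero-free. So Theorem~\ref{prop:main-expansion} is needed only on the compact interval \([b_n,0]\) with \(b_n\in(a_{n+1},a_n)\). There \(\Ue\) has exactly the \(n\) matched zeros, all with \(\zeta<-\tfrac32\eta\). Because \(\zeta\mapsto z_\eps(\zeta)\) reverses order, \(z_{n,\eps}\) is the \(n\)-th positive zero.

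Your left endpoint \(a_n-1\) is also wrong in general. For \(n\ge7\) the Airy gaps drop below \(1\) (e.g.\ \(a_7-a_8\approx0.97\)), so \([a_n-1,M]\) contains \(a_{n+1}\) and the count would be \(n+1\). The endpoint must be chosen in \((a_{n+1},a_n)\).
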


\begin{remark}
The index in Corollary~\ref{cor:zeros} is the global positive-zero
index.  Identifying it takes more than a local Rouch\'e argument; the
counting is carried out in Section~\ref{sec:zero-proof}.
\end{remark}

\section{Earlier results and asymptotic regimes}
\label{sec:earlier-results}

\subsection{Overview of the asymptotic regimes}

The phrase ``\(q\)-Airy limit'' means little until one says which
function, which limit parameter, and which scaling of the argument is
intended.  Table~\ref{tab:regime-map} separates the principal regimes.

\begin{table}[htbp]
\centering
\small
\renewcommand{\arraystretch}{1.18}
\begin{tabularx}{\textwidth}{
  >{\raggedright\arraybackslash}p{0.16\textwidth}
  >{\raggedright\arraybackslash}p{0.16\textwidth}
  >{\raggedright\arraybackslash}p{0.25\textwidth}
  >{\raggedright\arraybackslash}X}
\toprule
Reference & Limiting process & Argument or object & Limiting model and relation to the results below\\
\midrule
Ismail \cite{Ismail2005}
& degree \(n\to\infty\), \(q\) fixed
& \(q^{-1}\)-Hermite and other \(q\)-orthogonal polynomials
 & \(A_q\) itself is the edge model.  This explains its interpretation
 as a \(q\)-analogue of Airy, but is not a \(q\uparrow1\) limit.\\
\addlinespace[0.4em]

R.\ Zhang \cite{ZhangTheta,ZhangRemarks}
& \(q\uparrow1\)
 & \(A_q((1-q)z)\), hence the argument tends to \(0\)
 & \(e^{-z}\); a regular exponential limit, not a turning-point
 limit.\\
\addlinespace[0.4em]

R.\ Zhang \cite{ZhangTheta,ZhangPR2008}
 & large integer \(n\), \(q\) fixed
 & \(A_q(q^{-nt}u)\)
 & theta functions, with arithmetic dependence on \(t\).\\
\addlinespace[0.4em]

R.\ Zhang \cite{ZhangScaled,ZhangFunctions2012}
 & \(q\uparrow1\) coupled to \(n\to\infty\)
 & exponentially large positive or negative arguments
 & Gaussian exponential or Gaussian times a cosine.\\
\addlinespace[0.4em]

Li--Wong \cite{LiWong2013}
 & \(q\uparrow1\), \(x>1/4\) fixed
 & \(A_q(\sqrt q\,x)\)
 & an \(\Ai(-\xi(x))\) outer approximation; no stated uniform
 error as \(x\downarrow1/4\).\\
\addlinespace[0.4em]

Hamamoto--Kajiwara--Witte \cite{HKW2006}
& \(q\uparrow1\), compact \(s\)-sets
& Kajiwara's
\(\Aiq={}_1\phi_1(0;-q;q,-\,\cdot\,)\)
 near a complex turning point
 & rotated classical Airy functions with a uniform
 \(O(\delta^2)\) remainder.\\
\addlinespace[0.4em]

Present scaling
& \(q=e^{-\eps}\uparrow1\), \(\zeta\) fixed
& \(A_q\bigl(\frac{\sqrt q}{4}e^{-\eps^{2/3}\zeta}\bigr)\)
& \(\Ai(\zeta)\), locally uniformly in \(\C\), with an explicit
first correction.\\
\bottomrule
\end{tabularx}
\caption{The main asymptotic regimes.}
\label{tab:regime-map}
\end{table}

\subsection{Ismail's fixed-\texorpdfstring{\(q\)}{q}
Plancherel--Rotach limit}

Ismail's 2005 paper derives Plancherel--Rotach asymptotics for
\(q\)-orthogonal polynomials with unbounded recurrence coefficients
\cite{Ismail2005}.  In the normalization quoted by Morita
\cite[Proposition~1]{Morita2011}, one representative limit is
\begin{equation}
 \lim_{n\to\infty}
 q^{n^2}t^n h_n(\sinh\xi_n\mid q)
 =A_q(1/t^2),
 \qquad e^{\xi_n}=tq^{-n/2},
 \label{eq:Ismail-limit}
\end{equation}
with \(q\) fixed.  In this sense, \(A_q\) plays for these
\(q\)-orthogonal polynomials the role played by the classical Airy
function at the soft edge for Hermite and Laguerre polynomials.

Here \(A_q\) is the limiting function in a large-degree formula, not
the object being taken to a limit, and Ismail's description of it as a
\(q\)-analogue of the Airy function refers to this soft-edge role.
What we study below runs the other way: a singular confluence of the
\(q\)-analogue itself to the classical Airy equation.

Ismail and Ruiming Zhang later studied chaotic and periodic
asymptotics for \(q\)-orthogonal polynomials \cite{IsmailRZhang2006},
again for fixed \(q\) and large degree.

\subsection{Ruiming Zhang's results on
\texorpdfstring{\(A_q\)}{Aq}}
\label{sec:zhang}

\subsubsection{The regular origin scaling}

Zhang observes, by dominated convergence, that for every \(z\in\C\),
\begin{equation}
 \lim_{q\uparrow1} A_q((1-q)z)=e^{-z}.
 \label{eq:Zhang-exponential}
\end{equation}
Indeed,
\[
 \frac{(1-q)^k}{(q;q)_k}q^{k^2}|z|^k
 \leq \frac{(q|z|)^k}{k!},
\]
which also gives local uniformity on compact \(z\)-sets.  Zhang uses
this observation to note that \(A_q\) is not a \(q\)-analogue of
\(\Ai\) under this elementary small-argument scaling
\cite[equations (8)--(10)]{ZhangTheta}; the same limit is repeated in
\cite{ZhangRemarks}.

There is no conflict with \eqref{eq:normalization}.  In
\eqref{eq:Zhang-exponential} the argument is \((1-q)z=O(\eps)\to0\)
and the summand tends termwise to the exponential series.  In
\eqref{eq:turning-scaling} the argument tends to \(1/4\), the two
relevant saddles coalesce, the local scale is \(\eps^{2/3}\), and an
exponentially large normalization is needed.  The two limits describe
different neighborhoods of the same entire function.

\subsubsection{Fixed-\texorpdfstring{\(q\)}{q}, large-argument theta
asymptotics}

For fixed \(0<q<1\), Zhang studies
\[
 A_q(q^{-nt}u),\qquad n\to\infty,\quad t>0,\quad u\ne0.
\]
Theorem~2.1 of \cite{ZhangTheta} gives explicit theta-function main
terms after extracting powers of \(u\) and \(q\).  For rational \(t\)
the phase is periodic along suitable subsequences; for irrational
\(t\) the error and the accessible phases depend on Diophantine
approximation.  The broader published Plancherel--Rotach treatment
\cite{ZhangPR2008} develops this chaotic and periodic asymptotic
framework for confluent basic hypergeometric series, with Ramanujan's
function as a special case.

Here the argument tends to infinity on a \(q\)-geometric scale.  The
theta functions arise from a discrete Laplace method and from the
lattice of dominant summands.  This mechanism is different from the
local cubic saddle that produces \(\Ai(\zeta)\) in
\eqref{eq:normalization}.

\subsubsection{Simultaneous
\texorpdfstring{\(q\uparrow1\)}{q tending to 1} and exponentially
large arguments}

Zhang also treats a coupled limit
\[
 q=e^{-\pi n^{-a}},\qquad 0<a<\frac12,\qquad n\to\infty,
\]
with arguments
\[
 \pm\exp\{2\pi(u+n^{1-a})\},\qquad u\in\mathbb R.
\]
Theorem~2.3 of \cite{ZhangScaled} gives, with exponentially small
errors, a Gaussian exponential for the negative argument and a
Gaussian exponential multiplied by \(\cos(\pi n^a u)\) for the
positive argument.  This is a \(q\uparrow1\) asymptotic formula for
\(A_q\), but it concerns a large-argument theta regime rather than
the finite turning point \(z=1/4\).

The later admissible-scale formulation
\cite[Definition~1 and Corollary~3, pp.~183--185]{ZhangFunctions2012}
extends this
\(q\uparrow1\) Plancherel--Rotach program to general confluent basic
hypergeometric series and includes \(A_q\) as a special case.  Its
Ramanujan-function arguments are again exponentially large, so it
does not supply the bounded turning-point asymptotic studied here.

\subsubsection{Comparison with the turning-point scaling}

Zhang's conclusion about the regular scaling \((1-q)z\) stands.  Taken
together, the two limits say that \(A_q\) has an exponential
confluence near the origin and an Airy confluence in a separately
normalized \(O(\eps^{2/3})\) neighborhood of the turning point
\(1/4\).  What produces the Airy behavior is the coalescing of the
saddles, not the passage \(q\uparrow1\) on its own.

\subsection{Li--Wong's fixed-\texorpdfstring{\(x\)}{x} Airy
approximation}
\label{sec:li-wong}

Li and Wong prove that, for fixed real \(x>1/4\),
\begin{equation}
 A_q(\sqrt q\,x)
 \sim
 2\sqrt\pi
 \exp\!\left\{
 \frac{3\log^2x-\pi^2}{12\log(1/q)}
 \right\}
 \left(\frac{\xi(x)}{4x-1}\right)^{1/4}
 \Ai(-\xi(x)),
 \label{eq:LiWong}
\end{equation}
where
\begin{equation}
 \frac23\xi(x)^{3/2}
 =
 \frac1{\log(1/q)}
 \int_0^{\log(4x)}
 \arctan\sqrt{e^s-1}\,\dd s.
 \label{eq:LiWong-xi}
\end{equation}
This is Theorem~2, equations (18)--(19), of
\cite{LiWong2013}; numerical comparisons for fixed \(x\) are also
given there.

This expansion agrees formally with the turning-point scaling, though
it does not imply the result of
Subsection~\ref{sec:present-result}.  To see the agreement, fix
\(\zeta<0\) and set
\[
 x=x_\eps(\zeta)=\frac14e^{-\eta^2\zeta}.
\]
Writing \(a=\log(4x)=-\eta^2\zeta>0\), one has
\[
 \arctan\sqrt{e^s-1}
 =s^{1/2}\left(1-\frac{s}{12}+O(s^2)\right),
\]
and hence
\[
 \int_0^a\arctan\sqrt{e^s-1}\,\dd s
 =\frac23a^{3/2}-\frac1{30}a^{5/2}+O(a^{7/2}).
\]
Consequently,
\begin{align}
 \xi(x_\eps(\zeta))
 &= -\zeta-\frac{\eta^2\zeta^2}{30}+O_\zeta(\eta^4),
 \label{eq:xi-formal}\\
 \eta^{1/2}
 \left(\frac{\xi(x_\eps(\zeta))}
 {4x_\eps(\zeta)-1}\right)^{1/4}
 &=1+\frac{4\zeta}{30}\eta^2+O_\zeta(\eta^4).
 \label{eq:amp-formal}
\end{align}
The exponential in \eqref{eq:LiWong} exactly cancels the exponential
part of \(N_\eps\).  Expanding \(\Ai(-\xi)\) in
\eqref{eq:xi-formal} and multiplying by
\eqref{eq:amp-formal} produces
\[
 \Ai(\zeta)
 +\frac{\eta^2}{30}
 \bigl(4\zeta\Ai(\zeta)+\zeta^2\Ai'(\zeta)\bigr)
 +O_\zeta(\eta^4),
\]
which agrees with the first correction in \eqref{eq:main-expansion}.

\begin{remark}
The comparison above is between formal expansions only.  Li and Wong's
theorem assumes \(x>1/4\) \emph{fixed} and supplies neither a
quantitative remainder nor an estimate uniform as
\(x\downarrow1/4\), so it cannot be applied with
\(x=x_\eps(\zeta)\); and even if it could, the substitution would
reach only real \(\zeta<0\), whereas
Subsection~\ref{sec:present-result} concerns compact subsets of
\(\C\).
\end{remark}

\subsection{The Kajiwara \texorpdfstring{\(q\)}{q}-Airy function}
\label{sec:kajiwara}

\subsubsection{Kajiwara's function}

The term ``\(q\)-Airy function'' is not unique in the literature.
Morita compares Ramanujan's \(q\)-analogue \(A_q\) with the distinct
Kajiwara \(q\)-Airy function \cite{Morita2011}.  The latter is
\begin{equation}
 \Aiq(x)
 :=
 \phioneone\!\left(
 \begin{matrix}0\\-q\end{matrix};q,-x
 \right).
 \label{eq:Kajiwara-def}
\end{equation}
The two functions satisfy different equations:
\begin{align}
 &qxA_q(q^2x)-A_q(qx)+A_q(x)=0,
 \label{eq:Aq-equation-again}\\
 &\Aiq(q^2x)+x\Aiq(qx)-\Aiq(x)=0.
 \label{eq:Aiq-equation}
\end{align}
Consequently, an asymptotic formula for \(\Aiq\) does not immediately
give one for \(A_q\).

\subsubsection{Hamamoto--Kajiwara--Witte}

Let
\[
 Q=e^{-\delta^3/2},
 \qquad X=-2i\,e^{-s\delta^2/2}.
\]
Proposition~3.8 of Hamamoto, Kajiwara, and Witte
\cite{HKW2006} gives, uniformly for \(s\) in any compact subset of
\(\C\),
\begin{align}
 &\phioneone\!\left(
 \begin{matrix}0\\-Q\end{matrix};Q,-QX
 \right)
 \notag\\
 &\quad =
 2\sqrt\pi\,\delta^{-1/2}
 \exp\!\left(
 -\frac{\pi i\log2}{\delta^3}
 +\frac{\pi i s}{2\delta}
 +\frac{\pi i}{12}
 \right)
 \left[
 \Ai(se^{\pi i/3})+O_K(\delta^2)
 \right],
 \label{eq:HKW-one}\\[2mm]
 &\phioneone\!\left(
 \begin{matrix}0\\-Q\end{matrix};Q,QX
 \right)
 \notag\\
 &\quad =
 2\sqrt\pi\,\delta^{-1/2}
 \exp\!\left(
 \frac{\pi i\log2}{\delta^3}
 -\frac{\pi i s}{2\delta}
 -\frac{\pi i}{12}
 \right)
 \left[
 \Ai(se^{-\pi i/3})+O_K(\delta^2)
 \right].
 \label{eq:HKW-two}
\end{align}
This is close in form to what is proved below: both limits are uniform
on compact subsets of a complex turning-point variable, and both
involve a cubic scale.  Since the two functions obey different
\(q\)-difference equations, the formulas above say nothing directly
about \(A_q\); combined with Morita's connection identity, however,
their symmetric combination does recover the leading \(\Ai\)-limit,
as recalled in \ref{app:Bq-companion}.  The first correction and its
\(O_K(\eps)\) remainder come from the direct argument.

\subsection{Fixed-\texorpdfstring{\(q\)}{q} asymptotics of large-index
zeros}
\label{sec:dai}

Two papers on zeros should be kept apart, both from each other and
from the work of Ruiming Zhang discussed in
Subsection~\ref{sec:zhang}: that of Ismail and Changgui Zhang on the
zeros of \(A_q\) \cite{IsmailCZhang2007}, and that of Dai, Ismail
and Xiang-Sheng Wang \cite{DaiIsmailWang2020}.  The latter introduce
the two-parameter Ramanujan-type function
\[
 F(w,A,B;q)
 =
 \sum_{n=0}^{\infty}
 \sum_{k=0}^{n}
 \frac{(-w)^n q^{\binom n2}A^kB^{n-k}}
 {(q;q)_k(q;q)_{n-k}},
\]
derive connection and integral representations, and study its zeros
for special ratios \(B/A\).  Their Theorems~4.2--4.3 show that a
suitably scaled \(n\)-th zero is analytic in \(q^n\), with a
convergent Taylor expansion whose coefficient structure generalizes
the earlier \(A_q\) result of Ismail--Changgui Zhang.

For \(A_q\), the classical fixed-\(q\), large-index form is
schematically
\begin{equation}
 \rho_n(q)
 =
 q^{1-2n}
 \left(1-\sum_{j\ge1}y_j(q)q^{jn}\right),
 \qquad n\to\infty,\quad q\ \text{fixed},
 \label{eq:fixed-q-large-n-zero}
\end{equation}
with a convergent series in \(q^n\), as proved by Ismail and Changgui
Zhang.  The turning-point formula proved below in
Corollary~\ref{cor:zeros} is instead \eqref{eq:zero-exp-form}, where
\(a_n<0\) is the \(n\)-th zero of \(\Ai\), ordered
\(a_1>a_2>\cdots\).

Equations \eqref{eq:fixed-q-large-n-zero} and
\eqref{eq:zero-exp-form} correspond to different limits in a
two-parameter problem:
\[
 \begin{array}{c|c|c}
 & q & \text{zero index}\\ \hline
 \text{Ismail--C.\ Zhang / Dai--Ismail--Wang}
 & \text{fixed in }(0,1) & n\to\infty\\
 \text{turning limit here}
 & q\uparrow1 & n\text{ fixed}.
 \end{array}
\]
Neither implies the other; a transition regime
\(n=n(\eps)\to\infty\) would call for estimates uniform in both
parameters.

\section{Proof of the turning-point expansion}
\label{sec:main-proof}

Fix a compact set \(K\subset\C\).  All constants in this section may
depend on \(K\), but are independent of \(\zeta\in K\) and of
sufficiently small \(\eps>0\).  We use the principal logarithm and the
principal dilogarithm
\[
 \operatorname{Li}_2(a)
 =
 -\int_0^a\frac{\Log(1-s)}s\,\dd s,
 \qquad a\in\C\setminus[1,\infty).
\]
Every fractional power of \(\eps\) is positive.

\subsection{An exact Gaussian integral}

\begin{lemma}[Integral representation and period decomposition]
\label{lem:gaussian-integral}
If \(0<r<1\), then, uniformly for \(|z|\le r\),
\begin{equation}
 A_q(z)
 =
 \frac{1}{\sqrt{4\pi\eps}}
 \int_{\mathbb R}
 \frac{e^{-t^2/(4\eps)}}{(-ze^{it};q)_\infty}\,\dd t.
 \label{eq:gaussian-integral}
\end{equation}
Moreover,
\begin{equation}
 A_q(z)=\sum_{m\in\mathbb Z}I_m(\eps,z),
 \label{eq:period-decomposition}
\end{equation}
where
\begin{equation}
 I_m(\eps,z)
 =
 \frac{1}{\sqrt{4\pi\eps}}
 \int_{-\pi}^{\pi}
 \frac{\exp(-(s+2\pi m)^2/(4\eps))}
 {(-ze^{is};q)_\infty}\,\dd s.
 \label{eq:period-integral}
\end{equation}
The series in \eqref{eq:period-decomposition} is absolutely and
uniformly convergent on \(|z|\le r\).
\end{lemma}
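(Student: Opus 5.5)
Since \(|z|\le r<1\) and \(|e^{it}|=1\), we have \(|ze^{it}q^j|\le r\) for all \(j\ge0\), so \(1/(-ze^{it};q)_\infty\) is bounded uniformly in \(t\in\mathbb R\) and \(|z|\le r\). Moreover, Euler's identity gives the absolutely convergent expansion
\[
 \frac{1}{(-ze^{it};q)_\infty}=\sum_{k\ge0}\frac{(-z)^ke^{ikt}}{(q;q)_k}.
\]
The plan is to insert this into the right-hand side of \eqref{eq:gaussian-integral} and integrate term by term. The termwise interchange is justified because
\[
 \sum_k\frac{r^k}{(q;q)_k}=\frac{1}{(r;q)_\infty}<\infty
\]
and \(e^{-t^2/(4\eps)}\) is integrable. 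Each term is then a Gaussian Fourier transform:
\[
 \frac{1}{\sqrt{4\pi\eps}}\int_{\mathbb R}e^{-t^2/(4\eps)}e^{ikt}\dd t=e^{-\eps k^2}=q^{k^2}.
\]
Summing reproduces \eqref{eq:Aq-def} exactly. Uniformity on \(|z|\le r\) follows because the dominating series is independent of \(z\).

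For the period decomposition, the integrand's factor \(1/(-ze^{it};q)_\infty\) is \(2\pi\)-periodic in \(t\). The plan is to split \(\mathbb R=\bigcup_{m\in\mathbb Z}[2\pi m-\pi,2\pi m+\pi)\) and substitute \(t=s+2\pi m\) with \(s\in[-\pi,\pi]\). Countable additivity of the Lebesgue integral, which is legitimate since the integrand is absolutely integrable, then gives \eqref{eq:period-decomposition} with \(I_m\) as in \eqref{eq:period-integral}.

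For absolute and uniform convergence, bound
\[
 |I_m(\eps,z)|\le \frac{(r;q)_\infty^{-1}}{\sqrt{4\pi\eps}}\int_{-\pi}^{\pi}e^{-(s+2\pi m)^2/(4\eps)}\dd s.
\]
For \(|m|\ge1\), the exponent satisfies \(|s+2\pi m|\ge \pi(2|m|-1)\), so the right side is at most \(C_\eps e^{-\pi^2(2|m|-1)^2/(4\eps)}\). This is summable in \(m\) independently of \(z\), and the Weierstrass M-test gives the conclusion.

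There is no real obstacle here. The only point needing care is the uniform lower bound \(|(-ze^{it};q)_\infty|\ge (r;q)_\infty>0\), which follows from \(|1+ze^{it}q^j|\ge 1-rq^j\). The restriction \(r<1\) is exactly what keeps this bound away from zero and ensures the Euler series converges absolutely on the unit circle. Extending to larger \(|z|\), as will be needed at \(z\approx1/4\), is already covered since \(1/4<1\).
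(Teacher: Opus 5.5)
Your proposal is correct and follows the paper's proof essentially step for step: the same Euler $q$-binomial expansion with a $z$-independent majorant, termwise Gaussian Fourier transforms giving $q^{k^2}$, and the splitting of $\mathbb R$ into translated periods. The only cosmetic difference is in the convergence step. The paper bounds $\sum_m|I_m(\eps,z)|$ in one stroke by $1/(r;q)_\infty$, using that the translated intervals partition $\mathbb R$; you instead bound each $|I_m|$ by its Gaussian tail and apply the Weierstrass M-test, which makes the uniformity of the convergence slightly more explicit.
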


\begin{proof}
The \(q\)-binomial identity
\[
 \frac1{(w;q)_\infty}
 =\sum_{k=0}^{\infty}\frac{w^k}{(q;q)_k},
 \qquad |w|<1,
\]
follows by expanding \(F(w)=1/(w;q)_\infty\) in powers of \(w\) and
using \((1-w)F(w)=F(qw)\).  For \(|z|\le r\), this series is
uniformly
absolutely convergent after setting \(w=-ze^{it}\), since
\[
 \sum_{k=0}^{\infty}\frac{|z|^k}{(q;q)_k}
 \le\frac{1}{(q;q)_\infty(1-r)}.
\]
Termwise integration is therefore justified by the Gaussian
majorant.  The Fourier transform
\[
 \frac{1}{\sqrt{4\pi\eps}}
 \int_{\mathbb R}e^{-t^2/(4\eps)}e^{ikt}\,\dd t
 =e^{-\eps k^2}=q^{k^2}
\]
gives \eqref{eq:gaussian-integral}.

The reciprocal product is \(2\pi\)-periodic on the real axis.
Splitting \(\mathbb R\) into the intervals
\([-\pi+2\pi m,\pi+2\pi m]\) yields
\eqref{eq:period-decomposition}.  Finally,
\[
 \left|\frac1{(-ze^{is};q)_\infty}\right|
 \le\frac1{(r;q)_\infty},
\]
and the translated intervals in \eqref{eq:period-integral} partition
the real line.  Hence
\[
 \sum_{m\in\mathbb Z}|I_m(\eps,z)|
 \le\frac1{(r;q)_\infty},
\]
uniformly for \(|z|\le r\).
\end{proof}

For \(\zeta\in K\),
\[
 |z_\eps(\zeta)|
 \le\frac14
 \exp\!\left(\eps^{2/3}
 \sup_{\zeta\in K}|\operatorname{Re}\zeta|\right).
\]
Thus \(|z_\eps(\zeta)|\le1/3\) for all sufficiently small \(\eps\).
We write \(A_q^{(0)}\) for the term \(m=0\) in
\eqref{eq:period-decomposition} and \(A_q^{(\mathrm{nc})}\) for the
sum over \(m\ne0\).

Where the turning point comes from is easiest to see before any
contour is chosen.  The leading phase is
\[
 \Phi(t;z)=-\frac{t^2}{4}+\operatorname{Li}_2(-ze^{it}).
\]
On the principal branch near coalescence, its saddle equation is
\[
 \partial_t\Phi(t;z)
 =
 -\frac t2-i\Log(1+ze^{it})=0
 \quad\Longleftrightarrow\quad
 e^{it/2}=1+ze^{it}.
\]
With \(y=e^{it/2}\), this becomes
\begin{equation}
 zy^2-y+1=0.
 \label{eq:saddle-quadratic}
\end{equation}
The two saddles therefore coalesce when \(1-4z=0\), at
\[
 z=\frac14,\qquad y=2,\qquad t_0=-i\log4.
\]
It is this degeneracy that dictates both the contour through
\(t_0\) and the cubic Airy scaling used below.

\subsection{A pole-free contour}

Set
\[
 t_0=-iL,\qquad x_*=\sqrt3\,L<\pi.
\]
Let \(\Gamma_t\), oriented from \(-\pi\) to \(\pi\), be the
concatenation
\begin{align*}
 &[-\pi,-x_*],\\
 &\{t_0+\rho e^{5\pi i/6}:2L\ge\rho\ge0\},\\
 &\{t_0+\rho e^{\pi i/6}:0\le\rho\le2L\},\\
 &[x_*,\pi].
\end{align*}
The two sloping segments meet at \(t_0\), and their other endpoints
are \(-x_*\) and \(x_*\).

\begin{lemma}[Uniform contour deformation]
\label{lem:contour}
For all sufficiently small \(\eps\), uniformly for \(\zeta\in K\),
the contour \(\Gamma_t\) has a positive distance from every pole of
\[
 \frac1{(-z_\eps(\zeta)e^{it};e^{-\eps})_\infty}.
\]
Its image under \(t\mapsto-z_\eps(\zeta)e^{it}\) also has a positive
distance from \([1,\infty)\).  Furthermore,
\begin{equation}
 \int_{-\pi}^{\pi}
 \frac{e^{-t^2/(4\eps)}}
 {(-z_\eps(\zeta)e^{it};e^{-\eps})_\infty}\,\dd t
 =
 \int_{\Gamma_t}
 \frac{e^{-t^2/(4\eps)}}
 {(-z_\eps(\zeta)e^{it};e^{-\eps})_\infty}\,\dd t.
 \label{eq:central-deformation}
\end{equation}
\end{lemma}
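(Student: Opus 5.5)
The plan is to reduce everything to one geometric fact: the map \(w=-z_\eps(\zeta)e^{it}\) sends the closed triangle \(T\) with vertices \(-x_*\), \(x_*\), \(t_0=-iL\), together with the two real segments \([-\pi,-x_*]\) and \([x_*,\pi]\), into a fixed compact set at positive distance from \([1,\infty)\). The distance must be independent of \(\zeta\in K\) and of small \(\eps\).

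First observe that the poles of \(1/(-z e^{it};q)_\infty\) are the points where \(w q^j=1\) for some \(j\ge0\), i.e.\ where \(w=q^{-j}\in[1,\infty)\). Hence the second assertion of Lemma~\ref{lem:contour} implies the first. To see the distance to the poles quantitatively, I would still locate the poles explicitly. Using \(z_\eps(\zeta)=\tfrac{\sqrt q}{4}e^{-\eta^2\zeta}\), the equation \(w=q^{-j}\) becomes \(e^{it}=-4q^{-j-1/2}e^{\eta^2\zeta}\). So
\[
\operatorname{Re}t\equiv \pi+\eta^2\operatorname{Im}\zeta \pmod{2\pi},\qquad
\operatorname{Im}t=-L-(j+\tfrac12)\eps-\eta^2\operatorname{Re}\zeta .
\]
For \(\zeta\in K\) and small \(\eps\), every pole therefore has \(\bigl|\operatorname{Re}t-(2k+1)\pi\bigr|\le C_K\eps^{2/3}\) for some integer \(k\), and \(\operatorname{Im}t\le -L/2\).

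Next I check the geometry of \(\Gamma_t\) and \(T\). The segment from \(t_0\) in direction \(e^{\pi i/6}\) rises by \(\rho/2\), so it reaches the real axis at \(\rho=2L\), at the point \(\sqrt3L=x_*\); similarly for the \(e^{5\pi i/6}\) segment. Thus \(\Gamma_t\) is the stated closed path and \(T\) is the bounded region enclosed between \([-x_*,x_*]\) and the two sloping segments. Numerically \(x_*=\sqrt3\log4\approx2.40<\pi\), so every point of \(T\) satisfies \(|\operatorname{Re}t|\le x_*\). Every point of the horizontal pieces satisfies \(\operatorname{Im}t=0\).

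For the distance from \([1,\infty)\) I split into two cases.
\begin{itemize}
\item On \(T\), we have \(\arg w=\pi+\operatorname{Re}t-\eta^2\operatorname{Im}\zeta\). Since \(|\operatorname{Re}t|\le x_*\), this stays at angular distance at least \(\tfrac12(\pi-x_*)\) from \(2\pi\mathbb Z\) once \(C_K\eps^{2/3}<\tfrac12(\pi-x_*)\). Also \(|w|=|z|e^{\operatorname{Im}t}\) lies in a compact subset of \((0,\infty)\), because \(\operatorname{Im}t\in[-L,0]\); indeed \(|w|\le\tfrac13\cdot4\). A point with modulus in a compact annulus and argument bounded away from \(0\) is at positive distance from \([1,\infty)\), uniformly.
\item On \([-\pi,-x_*]\cup[x_*,\pi]\), we have \(|w|=|z_\eps(\zeta)|\le\tfrac13\), so the distance to \([1,\infty)\) is at least \(\tfrac23\).
\end{itemize}
The same bounds, or directly the explicit pole locations above, give a uniform positive distance from \(\Gamma_t\) to the pole set.

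Finally, for \eqref{eq:central-deformation} I would argue as follows. The integrand is \(e^{-t^2/(4\eps)}\) times \(1/(w;q)_\infty\), which is analytic wherever \(w\notin\{q^{-j}\}\). By the first case above it is therefore analytic on a neighbourhood of the closed triangle \(T\). Cauchy's theorem on \(\partial T\) lets us replace \([-x_*,x_*]\) by the two sloping segments. The outer pieces \([-\pi,-x_*]\) and \([x_*,\pi]\) are common to both sides, so the identity follows.

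The only delicate point, and the step I expect to be the main obstacle, is uniformity. The branch cut and the poles drift with \(\arg z=-\eta^2\operatorname{Im}\zeta\), and the poles with \(j\) large accumulate toward \(\operatorname{Im}t=-L\) from below. I handle this by taking \(\eps\) so small that \(C_K\eps^{2/3}\) is below half the fixed gap \(\pi-x_*\). With that choice the pole set lies in thin vertical strips around odd multiples of \(\pi\), uniformly in \(j\), and those strips are disjoint from \(T\).
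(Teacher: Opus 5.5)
Your proposal is correct and follows essentially the paper's route: the explicit pole coordinates, whose real parts lie within \(O_K(\eps^{2/3})\) of odd multiples of \(\pi\) while \(|\operatorname{Re}t|\le x_*<\pi\) on the closed triangle with vertices \(-x_*,x_*,t_0\), then Cauchy's theorem on that triangle. The only variation is that you get the cut separation from a direct \(\eps\)-uniform bound on \(\arg w\) over the whole triangle, where the paper uses an \(\eps=0\) computation plus perturbation; note also a harmless sign slip, \(|w|=|z|e^{-\operatorname{Im}t}\), which is the formula your bound \(|w|\le\frac43\) actually uses.
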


\begin{proof}
For small \(\eps\),
\[
 \Log z_\eps(\zeta)
 =-L-\eta^2\zeta-\frac{\eta^3}{2}
\]
is the principal logarithm.  The poles in the \(t\)-plane are
\begin{equation}
 p_{k,j}
 =(2k+1)\pi
 +i\!\left(\Log z_\eps(\zeta)-j\eps\right),
 \qquad k\in\mathbb Z,\quad j\ge0.
 \label{eq:poles}
\end{equation}
On the sloping segments, \(|\operatorname{Re}t|\le x_*<\pi\),
whereas the real parts of the poles differ from odd multiples of
\(\pi\) by \(O_K(\eta^2)\).  On the outer real segments, every pole
has imaginary part at most \(-L/2\) when \(\eps\) is small.  This
gives a uniform positive distance from the pole set.

At \(\eps=0\), the compact image
\(-e^{it}/4\), \(t\in\Gamma_t\), does not meet
\([1,\infty)\).  Indeed, on a sloping segment
\[
 t=t_0+\rho e^{i\theta},
 \qquad
 0\le\rho\le2L,\quad
 \theta\in\{\pi/6,5\pi/6\},
\]
its image is
\[
 -e^{-\rho/2}e^{\pm i\sqrt3\rho/2}.
\]
Since \(0\le\sqrt3\rho/2\le\sqrt3L<\pi\), this number is never
positive real; at the real endpoints the only positive values have
modulus \(1/4\).
The perturbed image converges uniformly to this compact set, proving
the asserted cut separation.

The real segment \([-x_*,x_*]\) and the two sloping segments bound the
triangle with vertices \(-x_*,x_*,t_0\).  By
\eqref{eq:poles}, its closure contains no pole.  For each fixed
sufficiently small \(\eps>0\), the series
\(\sum_{j=0}^{\infty}
\lvert z_\eps(\zeta)e^{it}e^{-j\eps}\rvert\)
converges locally uniformly in \((t,\zeta)\) on compact subsets.  Hence
\((-z_\eps(\zeta)e^{it};e^{-\eps})_\infty\) converges locally uniformly
and defines a holomorphic function of \(t\); no uniformity in
\(\eps\) is needed at this point.  Its reciprocal is therefore
meromorphic in \(t\), and by the pole separation above it is
holomorphic on a neighborhood of the closed deformation triangle.
Cauchy's theorem replaces the real segment by the sloping segments,
and adding the common outer intervals proves
\eqref{eq:central-deformation}.
\end{proof}

Under the local change of variables
\begin{equation}
 t=t_0+2i\eta u,
 \label{eq:local-change}
\end{equation}
the left sloping segment maps to \(\arg u=\pi/3\), oriented toward
the origin, and the right segment maps to
\(\arg u=-\pi/3\), oriented away from the origin.  Thus the induced
\(u\)-contour has the reverse of the standard Airy orientation.

\subsection{Uniform asymptotics of the reciprocal product}

\begin{lemma}[Reciprocal \(q\)-product]
\label{lem:qproduct}
Put
\[
 a_\eps(t,\zeta):=-z_\eps(\zeta)e^{it}.
\]
Uniformly for \(t\in\Gamma_t\) and \(\zeta\in K\),
\begin{equation}
 \frac1{(a_\eps(t,\zeta);e^{-\eps})_\infty}
 =
 \exp\!\left(\frac{\operatorname{Li}_2(a_\eps(t,\zeta))}{\eps}\right)
 (1-a_\eps(t,\zeta))^{-1/2}
 \left(1+O_K(\eps)\right).
 \label{eq:qproduct-expansion}
\end{equation}
The square root and the dilogarithm use their principal branches.
\end{lemma}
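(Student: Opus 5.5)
We take logarithms and apply Euler--Maclaurin to the sum over the product index. Write
\[
 -\log(a;e^{-\eps})_\infty=\sum_{j\ge0}f(j\eps),\qquad f(x):=-\Log(1-ae^{-x}),
\]
with \(a=a_\eps(t,\zeta)\). We need the principal branch of \(\Log(1-ae^{-x})\) to be well defined and continuous for all \(x\ge0\), with a uniform lower bound on \(|1-ae^{-x}|\). This follows from Lemma~\ref{lem:contour}: the image of \(\Gamma_t\) under \(t\mapsto a_\eps\) stays a positive distance from \([1,\infty)\). Multiplying by \(e^{-x}\in(0,1]\) shrinks \(a\) radially toward \(0\), so \(ae^{-x}\) also avoids \([1,\infty)\). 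For the uniform bound, note that near the real endpoints \(|a|\approx1/4\), and along the sloping segments \(|a|\le 1/4\cdot e^{\mathrm{Im}\,\text{stuff}}\) stays bounded. The compact image, together with its radial shrinkings, therefore stays away from \(1\). Hence \(|1-ae^{-x}|\ge c_K>0\) and \(|a|\le C_K\) uniformly on \(\Gamma_t\times K\times[0,\infty)\).

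Next we apply the Euler--Maclaurin formula with step \(\eps\) and one correction term:
\[
 \sum_{j\ge0}f(j\eps)=\frac1\eps\int_0^\infty f(x)\,\dd x+\frac12f(0)+R,
 \qquad |R|\le C\eps\int_0^\infty|f''(x)|\,\dd x .
\]
The three terms are handled as follows.
\begin{itemize}
\item The integral: substituting \(s=ae^{-x}\) gives \(\int_0^\infty -\Log(1-ae^{-x})\,\dd x=-\int_0^a\Log(1-s)\,\dd s/s=\operatorname{Li}_2(a)\). The path here is the radial segment from \(0\) to \(a\), which avoids the cut, so this is the principal dilogarithm.
\item The boundary term: \(\tfrac12 f(0)=-\tfrac12\Log(1-a)\). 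Exponentiating the negated sum produces exactly the factor \((1-a)^{-1/2}\) with its principal branch.
\item The remainder: \(f''\) is a rational function of \(ae^{-x}\) of the form \(ae^{-x}/(1-ae^{-x})^2\), up to sign. It is therefore bounded by \(C_K e^{-x}\) by the uniform lower bound above, so \(\int_0^\infty|f''|\le C_K\) and \(R=O_K(\eps)\).
\end{itemize}
Since \(R\) is uniformly small, \(e^{R}=1+O_K(\eps)\). Exponentiating gives \eqref{eq:qproduct-expansion}.

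The main obstacle is the branch bookkeeping. We must check that \(\sum_j\Log(1-ae^{-j\eps})\) really equals \(\log\) of the product with no stray \(2\pi i\) multiples, and that the continuous branch used in Euler--Maclaurin is the principal one on the whole ray. Both follow from the radial-shrinking argument: each factor stays in the slit plane \(\mathbb C\setminus(-\infty,0]\) for \(1-s\), so the termwise principal logs sum to a legitimate logarithm, whose exponential is the product regardless of branch. We should also confirm that the endpoint-tail terms of Euler--Maclaurin at \(x=\infty\) vanish, which holds because \(f\) and \(f'\) decay like \(e^{-x}\).
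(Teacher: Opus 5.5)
Your proposal is correct and matches the paper's proof step for step. The paper likewise uses the compact star-shaped set \(\{sa: a\in\mathcal A_K,\ 0\le s\le1\}\) to keep \(ae^{-x}\) uniformly away from \([1,\infty)\), and applies the composite trapezoidal rule (your one-term Euler--Maclaurin) with error \(\frac{\eps}{8}\int_0^\infty|f_a''(x)|\,\dd x\). It then identifies the integral as the principal \(\operatorname{Li}_2(a)\) by a radial substitution and reads off \((1-a)^{-1/2}\) from \(\tfrac12 f_a(0)\). Your remark that the termwise principal logarithms sum to a genuine logarithm of the product is a detail the paper leaves implicit.
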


\begin{proof}
All values of \(a_\eps(t,\zeta)\) lie in a compact set
\(\mathcal A_K\Subset\C\setminus[1,\infty)\), by
Lemma~\ref{lem:contour}.  For \(a\in\mathcal A_K\), let
\[
 f_a(x)=-\Log(1-ae^{-x}).
\]
The star-shaped set
\[
 \mathcal R_K
 :=
 \{sa:a\in\mathcal A_K,\ 0\le s\le1\}
\]
has positive distance from the entire cut \([1,\infty)\).  Indeed, it
is compact, and if \(sa\in[1,\infty)\) with \(s>0\), then
\(a\in[1,\infty)\), contrary to
\(\mathcal A_K\Subset\C\setminus[1,\infty)\).  Consequently
\[
 \int_0^\infty|f_a''(x)|\,\dd x\le C_K,
 \qquad
 f_a''(x)=\frac{ae^{-x}}{(1-ae^{-x})^2}.
\]
The composite trapezoidal rule on the intervals
\([j\eps,(j+1)\eps]\) gives
\[
 \left|
 \sum_{j=0}^{\infty}f_a(j\eps)
 -\frac1\eps\int_0^\infty f_a(x)\,\dd x
 -\frac12f_a(0)
 \right|
 \le
 \frac{\eps}{8}\int_0^\infty|f_a''(x)|\,\dd x.
\]
Hence
\[
 \sum_{j=0}^{\infty}f_a(j\eps)
 =
 \frac1\eps\int_0^\infty f_a(x)\,\dd x
 +\frac12f_a(0)+O_K(\eps).
\]
With the principal branches fixed above, the substitution
\(s=e^{-x}\) identifies the integral without analytic-continuation
ambiguity:
\[
 \int_0^\infty-\Log(1-ae^{-x})\,\dd x
 =
 \int_0^1-\frac{\Log(1-as)}s\,\dd s
 =
 \operatorname{Li}_2(a).
\]
Since \(f_a(0)=-\Log(1-a)\),
exponentiation gives \eqref{eq:qproduct-expansion}.  The error is
uniform on the full contour, including its outer pieces.
\end{proof}

\subsection{The global phase and the noncentral periods}

Define
\begin{equation}
 \Phi_\eps(t;\zeta)
 :=
 -\frac{t^2}{4}
 +\operatorname{Li}_2(a_\eps(t,\zeta)),
 \qquad
 \Phi_0:=\frac{3L^2-\pi^2}{12},
 \label{eq:phase}
\end{equation}
and
\[
 G_\eps(t,\zeta)
 :=
 (1+z_\eps(\zeta)e^{it})^{-1/2}.
\]
Lemma~\ref{lem:qproduct} and
\eqref{eq:central-deformation} give
\begin{align}
 \mathcal U_\eps^{(0)}(\zeta)
 =
 \frac1{4\pi\eta}
 \int_{\Gamma_t}
 &\exp\!\left(
 \frac{\Phi_\eps(t;\zeta)-\Phi_0}{\eta^3}
 -\frac{L\zeta}{2\eta}
 -\frac{\eta\zeta^2}{4}
 \right)
 \notag\\
 &\quad\times
 G_\eps(t,\zeta)\bigl(1+R_\eps(t,\zeta)\bigr)\,\dd t,
 \label{eq:normalized-central}
\end{align}
where \(R_\eps=O_K(\eta^3)\) uniformly on \(\Gamma_t\).

\begin{lemma}[Noncentral periods]
\label{lem:noncentral}
There is an absolute \(\delta>0\) such that the normalized
noncentral contribution satisfies
\begin{equation}
 \sup_{\zeta\in K}
 |\mathcal U_\eps^{(\mathrm{nc})}(\zeta)|
 \le C\eta^2e^{-\delta/(2\eta^3)}
 \label{eq:noncentral-bound}
\end{equation}
for all sufficiently small \(\eps\).
\end{lemma}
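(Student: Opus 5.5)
The plan is to bound each noncentral period $I_m$, $m\neq0$, crudely in absolute value, with no contour deformation, and then to check that the Gaussian decay at distance $\ge\pi$ from the origin beats the exponentially large normalization $N_\eps$. The only fact needed is a numerical sign condition on the exponents, which leaves a margin of order $1/\eps$.

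First I bound the reciprocal product on the real $s$-interval. For $|s|\le\pi$ and $\zeta\in K$, write $r_\eps:=|z_\eps(\zeta)|=\tfrac14 e^{-\eta^2\operatorname{Re}\zeta-\eta^3/2}$. Each factor satisfies $|1+z_\eps e^{is}q^j|\ge 1-r_\eps q^j$, so
\[
\left|\frac1{(-z_\eps e^{is};q)_\infty}\right|\le\frac1{(r_\eps;q)_\infty}.
\]
Lemma~\ref{lem:qproduct} applies with $a=r_\eps\in[1/5,1/3]$, which is a compact set away from the cut. It gives $1/(r_\eps;q)_\infty\le C\exp(\operatorname{Li}_2(r_\eps)/\eps)$. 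Since $\operatorname{Li}_2$ is smooth near $1/4$ and $r_\eps=\tfrac14+O_K(\eta^2)$, we have $\operatorname{Li}_2(r_\eps)=\operatorname{Li}_2(1/4)+O_K(\eta^2)$. Hence this bound is $\le C\exp(\operatorname{Li}_2(1/4)/\eps+C_K/\eta)$.

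Next come the Gaussian weights. For $|s|\le\pi$ and $m\ne0$ we have $|s+2\pi m|\ge\pi(2|m|-1)$. Therefore
\[
|I_m|\le\frac{2\pi}{\sqrt{4\pi\eps}}\,\frac{1}{(r_\eps;q)_\infty}\,e^{-\pi^2(2|m|-1)^2/(4\eps)}.
\]
Summing over $m\ne0$ gives a geometric-type series dominated by its $|m|=1$ terms, so
\[
\sum_{m\ne0}|I_m|\le C\eps^{-1/2}\exp\bigl((\operatorname{Li}_2(1/4)-\pi^2/4)/\eps+C_K/\eta\bigr).
\]

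Then I multiply by the normalization. We have
\[
|N_\eps(\zeta)|\le C\eps^{1/6}\exp\bigl((\pi^2-3L^2)/(12\eps)+C_K/\eta\bigr),
\]
because the $\zeta$-dependent terms are $O_K(\eta^{-1})$ and $O_K(\eta)$. The total $1/\eps$ coefficient is
\[
\frac{\pi^2}{12}-\frac{L^2}{4}-\frac{\pi^2}{4}+\operatorname{Li}_2\!\left(\tfrac14\right)
=-\frac{\pi^2}{6}-\frac{(\log4)^2}{4}+\operatorname{Li}_2\!\left(\tfrac14\right)
\approx-1.645-0.480+0.268<-1.8 .
\]
Since $\operatorname{Li}_2(1/4)<1/4+1/64\cdot\ldots<0.3$ from its alternating-free positive series, this sign is rigorous. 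So I can choose an absolute $\delta>0$ with the coefficient below $-\delta$, say $\delta=1$, after any bookkeeping of the $-\eta^3/2$ shift.

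The remaining factors are $C_K/\eta$ in the exponent and the polynomial prefactor $\eps^{1/6-1/2}=\eta^{-1}$. Both are absorbed by sacrificing half of the decay: $C_K\eta^{-1}-\delta/(2\eta^3)\to-\infty$, and $\eta^{-1}e^{-\delta/(4\eta^3)}\le\eta^{2}$ for small $\eta$. This yields \eqref{eq:noncentral-bound} with $\delta$ absolute and $C$ depending only on $K$ through the threshold for "sufficiently small $\eps$".

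The main point to get right is the exponent bookkeeping in the final step, namely that the $1/\eps$ coefficient is strictly negative. The remainder is routine. I will also check that replacing $r_\eps$ by $1/4$ in $\operatorname{Li}_2$ costs only $O(\eta^{-1})$ in the exponent, and not $O(\eps^{-1})$; this holds because $r_\eps-1/4=O(\eta^2)$.
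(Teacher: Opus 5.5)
Your proof is correct and follows the paper's argument. Both bound the reciprocal product on the real line by $1/(r;q)_\infty$, control that by $\exp(\operatorname{Li}_2(r)/\eps)$, use the Gaussian decay on $|t|\ge\pi$, and check that the $1/\eps$ coefficient is negative after multiplying by $N_\eps$, absorbing the $O_K(\eta^{-1})$ terms into half of the decay.

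The differences are cosmetic. The paper takes the fixed radius $r=1/3$ and uses the elementary inequality $\log(1/(r;q)_\infty)\le-\log(1-r)+\operatorname{Li}_2(r)/\eps$, and it integrates the Gaussian tail directly. You take $r=|z_\eps(\zeta)|\to1/4$ through Lemma~\ref{lem:qproduct} and a Taylor expansion of $\operatorname{Li}_2$, which, as you note, costs only $O_K(\eta^{-1})$ in the exponent.
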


\begin{proof}
For \(0<r<1\),
\[
 \log\frac1{(r;e^{-\eps})_\infty}
 =\sum_{k=1}^{\infty}
 \frac{r^k}{k(1-e^{-k\eps})}
 \le-\log(1-r)+\frac{\operatorname{Li}_2(r)}{\eps}.
\]
The noncentral sum in \eqref{eq:period-decomposition} is the exact
part of the integral \eqref{eq:gaussian-integral} over
\(|t|\ge\pi\).  Using
\[
 \int_\pi^\infty e^{-t^2/(4\eps)}\,\dd t
 \le\frac{2\eps}{\pi}e^{-\pi^2/(4\eps)}
\]
and taking \(r=1/3\), one obtains
\[
 |A_q^{(\mathrm{nc})}(z)|
 \le C\sqrt\eps
 \exp\!\left[
 -\frac{\pi^2/4-\operatorname{Li}_2(1/3)}{\eps}
 \right].
\]
After multiplication by \(N_\eps(\zeta)\), the
\(\zeta\)-dependent part of the exponent is bounded in modulus by
\[
 \exp\!\left(
 \frac{LR_K}{2\eta}+\frac{\eta R_K^2}{4}
 \right),
 \qquad R_K=\sup_{\zeta\in K}|\zeta|.
\]
The number
\[
 \delta
 :=
 \frac{\pi^2}{4}
 -\frac{\pi^2-3L^2}{12}
 -\operatorname{Li}_2(1/3)
 >0
\]
is positive because
\(\operatorname{Li}_2(1/3)\le1/2<\pi^2/6\).
Since \(\eta^{-1}=o(\eta^{-3})\), the last two displays imply
\eqref{eq:noncentral-bound}.
\end{proof}

\begin{lemma}[Uniform descent]
\label{lem:descent}
There are constants \(c,g>0\) and \(C_K>0\) such that, on either
sloping segment
\[
 t=t_0+\rho e^{i\theta},
 \quad 0\le\rho\le2L,
 \quad \theta\in\{\pi/6,5\pi/6\},
\]
one has
\begin{equation}
 \operatorname{Re}\bigl(
 \Phi_\eps(t;\zeta)-\Phi_\eps(t_0;\zeta)
 \bigr)
 \le-c\rho^3+C_K\eta^2\rho.
 \label{eq:cubic-descent}
\end{equation}
On the two outer real pieces,
\begin{equation}
 \operatorname{Re}\bigl(
 \Phi_\eps(t;\zeta)-\Phi_\eps(t_0;\zeta)
 \bigr)\le-g.
 \label{eq:outer-gap}
\end{equation}
\end{lemma}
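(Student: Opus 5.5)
The plan is to compare \(\Phi_\eps\) with its \(\zeta\)-independent limit
\[
 \Phi_*(t):=-\frac{t^2}{4}+\operatorname{Li}_2\!\bigl(-\tfrac14e^{it}\bigr),
\]
prove both estimates for \(\Phi_*\) with \(\eta=0\), and treat the passage to \(\Phi_\eps\) as a perturbation of size \(O_K(\eta^2)\).

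\textbf{Step 1: the unperturbed phase at \(t_0\).} Write \(w=\tfrac14e^{it}\), so that
\[
 \Phi_*'(t)=-\frac t2-i\Log(1+w),\qquad
 \Phi_*''(t)=-\frac12+\frac{w}{1+w},\qquad
 \Phi_*'''(t)=\frac{iw}{(1+w)^2}.
\]
At \(t_0\) we have \(w=1\), so \(\Phi_*'(t_0)=\Phi_*''(t_0)=0\) and \(\Phi_*'''(t_0)=i/4\). Hence
\[
 \Phi_*(t)-\Phi_*(t_0)=\frac{i}{24}\rho^3e^{3i\theta}+O(\rho^4).
\]
For both \(\theta=\pi/6\) and \(\theta=5\pi/6\) one has \(e^{3i\theta}=i\), so the real part equals \(-\rho^3/24+O(\rho^4)\). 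We also record \(\Phi_*(t_0)=L^2/4+\operatorname{Li}_2(-1)=\Phi_0\).

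\textbf{Step 2: symmetry.} The map \(t\mapsto-\bar t\) fixes \(t_0\) and exchanges the two sloping segments. Since \(\operatorname{Li}_2\) has real Taylor coefficients and \(e^{i(-\bar t)}=\overline{e^{it}}\), we get \(\Phi_*(-\bar t)=\overline{\Phi_*(t)}\). Therefore it suffices to treat \(\theta=\pi/6\).

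\textbf{Step 3: global cubic descent for \(\Phi_*\) (main obstacle).} Put \(h(\rho)=\operatorname{Re}(\Phi_*(t)-\Phi_*(t_0))\) on \(0\le\rho\le2L\). The function \(h(\rho)/\rho^3\) extends continuously to \([0,2L]\) with value \(-1/24\) at \(0\). By compactness it is enough to show \(h(\rho)<0\) for \(0<\rho\le2L\). On the segment,
\[
 w=e^{-\rho/2}e^{i\sqrt3\rho/2},\qquad
 \operatorname{Re}t=\frac{\sqrt3}{2}\rho,\qquad
 \operatorname{Im}t=-L+\frac{\rho}{2}.
\]
I would show that \(h'(\rho)=\operatorname{Re}\bigl(e^{i\pi/6}\Phi_*'(t)\bigr)\) is negative on \((0,2L]\). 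This is an explicit elementary function of \(\rho\), namely
\[
 \operatorname{Re}\Bigl(e^{i\pi/6}\bigl(-\tfrac t2-i\Log(1+w)\bigr)\Bigr),
\]
a combination of linear terms in \(\rho\) with \(\log|1+w|\) and \(\arg(1+w)\). I would first try to prove its sign by hand. Failing that, I would use a rigorous interval-arithmetic check on a grid in \(\rho\), together with a derivative bound. Near \(\rho=0\) the negativity already follows from the Taylor expansion, since \(h'(\rho)=-\rho^2/8+O(\rho^3)\). This yields \(h(\rho)\le-2c\rho^3\) for some absolute \(c>0\).

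\textbf{Step 4: perturbation to \(\Phi_\eps\).} We have \(a_\eps=-w\,e^{-\eta^2\zeta-\eta^3/2}\). The derivative of \(\operatorname{Li}_2(a)\) with respect to \(\log a\) is \(-\Log(1-a)\), and Lemma~\ref{lem:contour} keeps all relevant points in a compact set away from the cut. Hence
\[
 D_\eps(t):=\Phi_\eps(t;\zeta)-\Phi_*(t)
\]
is holomorphic near \(\Gamma_t\) with \(|D_\eps'(t)|\le C_K\eta^2\) uniformly. Cauchy estimates on a fixed neighbourhood of the contour, minus the cut preimage, give this derivative bound. The mean value inequality along the segment then gives
\[
 |D_\eps(t)-D_\eps(t_0)|\le C_K\eta^2\rho,
\]
and adding this to Step 3 yields \eqref{eq:cubic-descent}.

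\textbf{Step 5: the outer gap.} For real \(t\) with \(x_*\le|t|\le\pi\), we have \(-t^2/4\le-3L^2/4\) and \(|\operatorname{Li}_2(-e^{it}/4)|\le\operatorname{Li}_2(1/4)<0.27\). Therefore
\[
 \operatorname{Re}\Phi_*(t)\le-1.17,
\]
while \(\Phi_*(t_0)=\Phi_0\approx-0.34\). This gives a gap of \(2g\) with \(2g\approx0.8\). The perturbation \(D_\eps\) is \(O_K(\eta^2)\) uniformly on these pieces, so for small \(\eps\) the bound \eqref{eq:outer-gap} holds with constant \(g\).
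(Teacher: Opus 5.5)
Your Steps 1, 2 and 4 are correct. The Taylor data $\Phi_*'''(t_0)=i/4$, the reflection $t\mapsto-\bar t$, and the mean-value bound on $D_\eps$ match what the paper does with the mixed difference $H(w+\sigma)-H(w)-H(\sigma)+H(0)$. Your Step 5 is a valid route that differs from the paper's. The paper shows that $\operatorname{Re}\Phi(x;1/4)$ decreases on $[x_*,\pi]$, so it relies on the value at the junction $x_*$, which comes from the sloping estimate. Your crude bound $-3L^2/4+\operatorname{Li}_2(1/4)<\Phi_0$ needs nothing else.

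The gap is Step 3, which carries the whole content of \eqref{eq:cubic-descent}. You reduce it, as the paper does, to $h'(\rho)<0$ on $(0,2L]$. You then only say you would try to prove this by hand, or else by interval arithmetic. As written, nothing establishes the sign away from $\rho=0$. A grid check would also need an explicit Taylor remainder near $\rho=0$ to hand off to, because $h'$ vanishes to second order there.

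The missing idea is the factorization $1+e^W=2e^{W/2}\cosh(W/2)$. On your segment $w=e^W$ with $W=\rho e^{2\pi i/3}$. In your explicit expression
\[
 h'(\rho)=-\frac{L+\rho}{4}+\frac12\log|1+w|+\frac{\sqrt3}{2}\arg(1+w),
\]
all constant and linear terms then cancel. What remains is $h'(\rho)=\tfrac12A+\tfrac{\sqrt3}{2}B$, where $A+iB=\Log\cosh(W/2)$.

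To get the signs, put $\alpha=\rho/4\in(0,\log2]$ and $\beta=\sqrt3\alpha\in(0,\pi/2)$. Then
\[
 \cosh(W/2)=\cosh\alpha\cos\beta-i\sinh\alpha\sin\beta .
\]
The real part is positive and the imaginary part negative, so $B<0$. This also makes the principal-branch identity for $\arg(1+w)$ hold on the segment. Next,
\[
 |\cosh(W/2)|^2=\cos^2\beta+\sinh^2\alpha ,
\]
so $A<0$ is equivalent to $\sinh\alpha<\sin(\sqrt3\alpha)$. That inequality holds on $(0,\log2]$ because the difference is concave, vanishes at $0$, and is positive at $\log2$. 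With this in hand, your compactness argument for $h(\rho)/\rho^3$ goes through exactly as you planned.
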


\begin{proof}
First put \(z=1/4\), set \(w=i(t-t_0)\), and define
\[
 H(w)=\operatorname{Li}_2(-e^w).
\]
Then
\[
 \Phi(t;1/4)-\Phi_0
 =
 F(w)
 :=
 \frac{Lw}{2}+\frac{w^2}{4}+H(w)-H(0).
\]
Since \(H'(w)=-\Log(1+e^w)\) and
\(1+e^w=2e^{w/2}\cosh(w/2)\),
\begin{equation}
 F'(w)=-\Log\cosh(w/2).
 \label{eq:Fprime}
\end{equation}
The identity here concerns branches, not merely a factorization.  The
two compact rays used below admit a connected open neighborhood on
which \(1+e^w\) and \(\cosh(w/2)\) avoid their zeros and the
relevant principal cuts.  On that neighborhood,
\[
 \Log(1+e^w)
 =
 \log2+\frac w2+\Log\cosh(w/2):
\]
the two analytic sides exponentiate to the same function and coincide
at \(w=0\), so they agree throughout by analytic continuation.  This
justifies \eqref{eq:Fprime} with the stated branches.
On the right sloping segment,
\(w=\rho e^{2\pi i/3}\).  If
\(\alpha=\rho/4\) and \(\beta=\sqrt3\,\alpha\), then
\[
 \cosh(w/2)
 =
 \cosh\alpha\cos\beta
 -i\sinh\alpha\sin\beta.
\]
For \(0<\alpha\le\log2\),
\(\sinh\alpha<\sin(\sqrt3\,\alpha)\).  Indeed, the difference
\[
 h(\alpha)=\sin(\sqrt3\,\alpha)-\sinh\alpha
\]
is strictly concave on this interval, \(h(0)=0\), and
\[
 h(\log2)
 =\sin(\sqrt3\log2)-\frac34>0.
\]
For the last inequality, note explicitly that
\(\sqrt3\log2<\pi/2\), and use \(\log2>2/3\), monotonicity of the sine
on \([0,\pi/2]\), and
\[
 \sin(2/\sqrt3)
 \ge\frac{2}{\sqrt3}
 -\frac{(2/\sqrt3)^3}{6}
 =\frac{14}{9\sqrt3}>\frac34.
\]
Writing the principal
logarithm of the preceding expression as \(A+iB\), it follows that
\(A<0\), \(B<0\), and
\[
 \operatorname{Re}\!\left[
 e^{2\pi i/3}\Log\cosh
 \left(\frac{\rho e^{2\pi i/3}}2\right)
 \right]>0.
\]
The quotient of this expression by \(\rho^2\) tends to \(1/8\) as
\(\rho\downarrow0\).  Compactness therefore gives \(\mu>0\) such
that
\[
 \operatorname{Re}F(\rho e^{2\pi i/3})
 \le-\frac{\mu}{3}\rho^3.
\]
The other segment is its complex conjugate.

On \(x\in[x_*,\pi]\),
\[
 \frac{\dd}{\dd x}\operatorname{Re}\Phi(x;1/4)
 =
 -\frac x2+\Arg\!\left(1+\frac14e^{ix}\right)<0,
\]
because the argument is at most \(\arctan(1/3)\), while
\(x/2\ge x_*/2\).  This gives a fixed negative gap on both outer
pieces.

It remains to restore the parameter \(\zeta\).  Put
\(\sigma=-\eta^2\zeta-\eta^3/2\).  The difference between the
perturbed phase increment and its value at \(z=1/4\) is
\[
 H(w+\sigma)-H(w)-H(\sigma)+H(0).
\]
All four arguments remain in a fixed compact subset of the branch
domain.  A uniform bound on \(H''\) therefore gives
\[
 |H(w+\sigma)-H(w)-H(\sigma)+H(0)|
 \le C_K\eta^2|w|.
\]
This proves \eqref{eq:cubic-descent}.  For the outer estimate, the two
outer pieces form a fixed compact set and, at \(\eta=0\), their phase
difference is bounded above by \(-g_0<0\).  The perturbed phase
difference converges uniformly to the unperturbed one there, uniformly
for \(\zeta\in K\).  It is therefore at most \(-g_0/2\) for all
sufficiently small \(\eta\), which proves \eqref{eq:outer-gap}.
\end{proof}

With \(t=t_0+2i\eta u\), \eqref{eq:cubic-descent} becomes
\begin{equation}
 \operatorname{Re}
 \frac{\Phi_\eps(t;\zeta)-\Phi_\eps(t_0;\zeta)}{\eta^3}
 \le-8c|u|^3+2C_K|u|.
 \label{eq:scaled-descent}
\end{equation}
This bound controls the part of the sloping contour outside the local
Taylor region.

\subsection{Local Airy reduction}

Fix \(A>0\) and set
\begin{equation}
 U_\eta=\left(A\log\frac1\eta\right)^{1/3}.
 \label{eq:growing-window}
\end{equation}
We use \eqref{eq:local-change} on the two Airy rays with
\(|u|\le U_\eta\).
Put \(p(u,\zeta)=u^3/3-\zeta u\), and define the exact local density
\begin{align}
 \mathcal D_\eta(u,\zeta)
 :={}&
 \exp\!\left(
 \frac{\Phi_\eps(t;\zeta)-\Phi_0}{\eta^3}
 -\frac{L\zeta}{2\eta}-\frac L4
 -\frac{\eta\zeta^2}{4}-p(u,\zeta)
 \right)
 \notag\\
 &\times e^{L/4}G_\eps(t,\zeta)
 \bigl(1+R_\eps(t,\zeta)\bigr),
 \qquad t=t_0+2i\eta u.
 \label{eq:local-density-definition}
\end{align}

\begin{lemma}[Local phase and amplitude]
\label{lem:local-expansion}
Uniformly for \(\zeta\in K\) and \(|u|\le U_\eta\),
\begin{align}
 \frac{\Phi_\eps(t;\zeta)-\Phi_0}{\eta^3}
 ={}&
 \frac{L\zeta}{2\eta}+\frac L4
 +\frac{u^3}{3}-\zeta u
 +\eta P_1(u,\zeta)+\eta^2P_2(u,\zeta)
 \notag\\
 &\quad+O_{K,A}\!\left(\eta^3(1+|u|^7)\right),
 \label{eq:local-phase}
\end{align}
where
\begin{align}
 P_1(u,\zeta)
 &=
 \frac{\zeta u^2}{2}-\frac u2-\frac{\zeta^2}{4},
 \label{eq:P1}\\
 P_2(u,\zeta)
 &=
 \frac{u^2}{4}+\frac{u\zeta^2}{4}
 -\frac{\zeta}{4}-\frac{u^5}{30}.
 \label{eq:P2}
\end{align}
After including the normalization, the square-root amplitude, and
the product remainder, this density satisfies
\begin{equation}
 \mathcal D_\eta(u,\zeta)
 =
 1+\eta Q_1(u,\zeta)+\eta^2Q_2(u,\zeta)
 +O_{K,A}\!\left(\eta^3(1+|u|^{12})\right),
 \label{eq:local-density}
\end{equation}
where
\begin{align}
 Q_1(u,\zeta)
 &=
 \frac{\zeta}{2}(u^2-\zeta),
 \label{eq:Q1}\\
 Q_2(u,\zeta)
 &=
 -\frac{u^5}{30}
 +\frac{\zeta^2u^4}{8}
 -\frac{\zeta^3u^2}{4}
 +\frac{\zeta^2u}{4}
 +\frac{\zeta^4}{8}.
 \label{eq:Q2}
\end{align}
\end{lemma}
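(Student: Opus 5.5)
The approach is to Taylor-expand the phase \(\Phi_\eps(t;\zeta)\) about the coalescence point \(t_0\), after separating off the \(\zeta\)-dependence through the shift \(\sigma\) used in the proof of Lemma~\ref{lem:descent}. With \(w=i(t-t_0)=-2\eta u\) and \(\sigma=-\eta^2\zeta-\eta^3/2\), we have \(a_\eps(t,\zeta)=-e^{w+\sigma}/4\cdot 4=-e^{w+\sigma}\) up to the factor \(1/4\) absorbed in \(t_0\). Concretely,
\[
\Phi_\eps(t;\zeta)=-\frac{(t_0-iw)^2}{4}\Big|_{w=-2\eta u}+H(w+\sigma),
\]
where \(H(v)=\operatorname{Li}_2(-e^v)\). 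Then \(\Phi_\eps-\Phi_0=F(w+\sigma)+(\text{explicit quadratic correction in }\sigma,w)\), since \(-t^2/4=\frac{L^2}{4}+\frac{Lw}{2}+\frac{w^2}{4}\) and \(F\) absorbs \(\frac{L v}{2}+\frac{v^2}{4}\) at \(v=w+\sigma\). From \eqref{eq:Fprime}, \(F'(v)=-\Log\cosh(v/2)=-v^2/8+v^4/192-\cdots\), so
\[
F(v)=-\frac{v^3}{24}+\frac{v^5}{960}+O(v^7).
\]
The remaining polynomial terms are \(-\frac{L\sigma}{2}-\frac{2w\sigma+\sigma^2}{4}\).

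Next we substitute \(w=-2\eta u\) and \(v=-2\eta u-\eta^2\zeta-\eta^3/2\), then divide by \(\eta^3\). The cubic term gives \(\frac{(2u+\eta\zeta+\eta^2/2)^3}{24}\), whose leading part is \(u^3/3\), followed by \(\eta\,\frac{u^2\zeta}{2}\) and corrections. The term \(-L\sigma/(2\eta^3)\) produces exactly \(\frac{L\zeta}{2\eta}+\frac L4\). The \(w\sigma\) term produces \(-\zeta u-\eta u/2\), and so on. The quintic term contributes \(-\frac{32\eta^5u^5}{960\eta^3}=-\eta^2u^5/30\). Collecting powers of \(\eta\) should reproduce \(P_1\) and \(P_2\). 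For the remainder, \(|v|\le C\eta(1+|u|)\), so the \(O(v^7)/\eta^3\) term is \(O(\eta^4(1+|u|^7))\), and the truncated cross terms are \(O(\eta^3(1+|u|^3))\). Both fit under \(O(\eta^3(1+|u|^7))\), uniformly, because \(\eta U_\eta\to0\) keeps \(v\) in a fixed disc where the Taylor series converges.

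For the density, exponentiate:
\[
\mathcal D_\eta=\exp\bigl(\eta P_1+\eta^2P_2-\eta\zeta^2/4+O(\cdot)\bigr)\,e^{L/4}G_\eps(1+R_\eps).
\]
For the amplitude, \(1+z_\eps e^{it}=1+e^{v}=2e^{v/2}\cosh(v/2)\), so
\[
e^{L/4}G_\eps=e^{-v/4}\bigl(\cosh(v/2)\bigr)^{-1/2}=1+\frac{\eta u}{2}+\frac{\eta^2\zeta}{4}+\frac{\eta^2u^2}{8}+O(\eta^3(1+|u|^3)).
\]
This is consistent with the branch identification in Lemma~\ref{lem:descent}. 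The \(\eta u/2\) cancels the \(-u/2\) in \(P_1\), leaving \(Q_1=\frac{\zeta}{2}(u^2-\zeta)\). Similarly, the \(\eta^2\zeta/4\) and \(\eta^2u^2/8\) terms should combine with \(P_2\), \(\frac12(\eta P_1')^2\), and the cross term with \(\eta u/2\) to give \(Q_2\). The term \(R_\eps=O(\eta^3)\) goes directly into the remainder.

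To control the remainder of the exponential uniformly for \(|u|\le U_\eta\), we use \(|e^{x}-1-x-x^2/2|\le|x|^3e^{|x|}\). Here \(x=\eta P_1+\eta^2P_2+O(\eta^3(1+|u|^7))\), and \(|x|\le C\eta(1+|u|^5)\). On the window this is \(o(1)\), because \(\eta U_\eta^5\to0\), so \(e^{|x|}\) is bounded. The cubic and higher cross terms are then \(O(\eta^3(1+|u|^{12}))\), where the degree 12 comes from, e.g., \((\eta^2 u^5)(\eta u^2)\) or \((\eta u^4)^3\). This is routine.

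The main obstacle is the bookkeeping: the exact cancellation of the \(L\)-dependent constants and the sign conventions from \(t=t_0+2i\eta u\) (\(w=-2\eta u\)). I will verify these first at \(\zeta=0\) against \(u^3/3-\eta u/2\), and then check the \(\zeta\)-terms against the formal Li--Wong expansion \eqref{eq:xi-formal}.
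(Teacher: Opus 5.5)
Your route is the paper's. You expand $\operatorname{Li}_2(-e^W)$ at $W=0$; your $F(v)=-v^3/24+v^5/960+O(v^7)$ is the same Taylor series as \eqref{eq:dilog-taylor}. You then divide by $\eta^3$, exponentiate, and multiply by the amplitude. The phase part is correct: collecting $-L\sigma/2$, $-w\sigma/2$, $-\sigma^2/4$, $-v^3/24$ and $v^5/960$ gives exactly $P_1$ and $P_2$. Two small slips in that part are harmless. The truncated cross terms are $O(\eta^3(1+|u|^4))$ rather than $|u|^3$, because the quintic contributes $-\eta^3\zeta u^4/12$. Your examples for the degree $12$ are also off, since $(\eta^2u^5)(\eta u^2)$ has degree $7$. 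Neither matters: any $\eta^{3+k}|u|^m$ with $k\ge1$ is $O(\eta^3)$ on the window.

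The genuine error is in the amplitude, and it breaks the identification of $Q_2$. Your identity $e^{L/4}G_\eps=e^{-v/4}(\cosh(v/2))^{-1/2}$ is right, but you expanded it incorrectly. The two factors are $e^{-v/4}=1-v/4+v^2/32+\cdots$ and $(\cosh(v/2))^{-1/2}=1-v^2/16+\cdots$, so
\[
e^{L/4}G_\eps=1-\frac v4-\frac{v^2}{32}+O(v^3)
=1+\frac{\eta u}{2}+\eta^2\Bigl(\frac{\zeta}{4}-\frac{u^2}{8}\Bigr)+O\bigl(\eta^3(1+|u|^3)\bigr).
\]
The coefficient is $-u^2/8$, not $+u^2/8$; it looks as if you kept only the $e^{-v/4}$ factor at second order.

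This sign is not cosmetic. Put $A_1=P_1-\zeta^2/4$. The $\eta^2$ coefficient of the density is then $P_2+\frac12A_1^2+\frac u2A_1+\frac{\zeta}{4}\mp\frac{u^2}{8}$. With the correct sign, the $\zeta$-free $u^2$ terms $\frac{u^2}{4}+\frac{u^2}{8}-\frac{u^2}{4}-\frac{u^2}{8}$ cancel, and you get exactly $Q_2$. With your sign you get $Q_2+u^2/4$ instead. Since $\mathcal A_\zeta[u^2/4]=\frac{\zeta}{4}\Ai(\zeta)\neq0$, this would change the first correction in Theorem~\ref{prop:main-expansion}.

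So the step you left as ``should combine \ldots\ to give $Q_2$'' fails as written. Correcting the amplitude coefficient repairs it. The rest of your remainder argument then goes through as in the paper: the exponent is $o(1)$ on the window, the cubic Taylor bound for $e^x$ applies, and $R_\eps=O(\eta^3)$ is absorbed.
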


\begin{proof}
Put
\[
 W=-2\eta u-\eta^2\zeta-\frac{\eta^3}{2}.
\]
Then \(a_\eps(t,\zeta)=-e^W\), and on a fixed disk about \(W=0\),
\begin{equation}
 \operatorname{Li}_2(-e^W)
 =
 -\frac{\pi^2}{12}
 -\frac L2W-\frac{W^2}{4}-\frac{W^3}{24}
 +\frac{W^5}{960}+O(W^7).
 \label{eq:dilog-taylor}
\end{equation}
Also
\[
 -\frac{t^2}{4}
 =\frac{L^2}{4}-L\eta u+\eta^2u^2.
\]
Substitution in \eqref{eq:phase} and collection through order
\(\eta^5\) give \eqref{eq:local-phase}--\eqref{eq:P2}.  The
remainder in \eqref{eq:dilog-taylor}, together with
\(\eta U_\eta\to0\), gives the stated uniform polynomial bound.

For the amplitude,
\[
 e^{L/4}G_\eps(t,\zeta)
 =
 1+\frac{\eta u}{2}
 +\eta^2\left(\frac{\zeta}{4}-\frac{u^2}{8}\right)
 +O_K\!\left(\eta^3(1+|u|^3)\right).
\]
After the factor \(e^{-\eta\zeta^2/4}\) from
\eqref{eq:normalized-central} is included, the remaining exponent is
\[
 \eta A_1(u,\zeta)+\eta^2P_2(u,\zeta)
 +O_{K,A}\!\left(\eta^3(1+|u|^7)\right),
\]
where
\[
 A_1(u,\zeta)
 =\frac{\zeta u^2}{2}-\frac u2-\frac{\zeta^2}{2}.
\]
Expanding the exponential, multiplying the amplitude, and using
\(R_\eps=O_K(\eta^3)\) give
\[
 Q_1=A_1+\frac u2,
\qquad
 Q_2=P_2+\frac12A_1^2
 +\left(\frac{\zeta}{4}-\frac{u^2}{8}\right)
 +\frac u2A_1.
\]
These are exactly \eqref{eq:Q1} and \eqref{eq:Q2}.  We now track the
remainder on the growing window.  Uniformly for \(\zeta\in K\),
\[
 |A_1(u,\zeta)|\le C_K(1+|u|^2),
 \qquad
 |P_2(u,\zeta)|\le C_K(1+|u|^5),
\]
and hence
\[
 \sup_{|u|\le U_\eta}
 |\eta A_1(u,\zeta)+\eta^2P_2(u,\zeta)|
 \le
 C_K\{\eta(1+U_\eta^2)+\eta^2(1+U_\eta^5)\}
 =o(1).
\]
For small \(\eta\) the exponent therefore lies in a fixed disk on
which
\[
 |e^X-1-X-X^2/2|\le C|X|^3.
\]
Expanding \(X=\eta A_1+\eta^2P_2\) shows explicitly that the omitted
terms include
\[
 \eta^3A_1P_2=O_K\!\left(\eta^3(1+|u|^7)\right),
 \qquad
 \eta^3A_1^3=O_K\!\left(\eta^3(1+|u|^6)\right).
\]
The terms \(\eta^4P_2^2\) and the remaining cubic products obey
\(O_{K,A}(\eta^3(1+|u|^{12}))\), since
\(\eta U_\eta^m\to0\) for every fixed \(m\).  The phase remainder
\(O_{K,A}(\eta^3(1+|u|^7))\), the amplitude remainder
\(O_K(\eta^3(1+|u|^3))\), and the product remainder
\(R_\eps=O_K(\eta^3)\) satisfy the same majorant.  Multiplying the
three expansions therefore gives exactly the remainder asserted in
\eqref{eq:local-density}, uniformly on the logarithmic window.
\end{proof}

For a polynomial \(P\), define the Airy functional
\[
 \mathcal A_\zeta[P]
 :=
 \frac1{2\pi i}
 \int_{\Gamma_{\rm Ai}}
 e^{u^3/3-\zeta u}P(u,\zeta)\,\dd u.
\]
Differentiation under the integral gives
\[
 \mathcal A_\zeta[u^j]=(-1)^j\Ai^{(j)}(\zeta).
\]
Consequently, the Airy equation \(\Ai''=\zeta\Ai\) yields
\begin{equation}
 \mathcal A_\zeta[Q_1]=0,
 \label{eq:first-cancellation}
\end{equation}
and, using
\[
 \Ai^{(4)}=2\Ai'+\zeta^2\Ai,
 \qquad
 \Ai^{(5)}=4\zeta\Ai+\zeta^2\Ai',
\]
one obtains
\begin{equation}
 \mathcal A_\zeta[Q_2]
 =
 \frac1{30}
 \left(4\zeta\Ai(\zeta)+\zeta^2\Ai'(\zeta)\right).
 \label{eq:second-correction}
\end{equation}
The order-\(\eta\) contribution therefore vanishes only after Airy
integration; there is no pointwise cancellation.

\subsection{Completion of the Airy contour}

\begin{proof}[Proof of Theorem~\ref{prop:main-expansion}]
On the sloping pieces of \(\Gamma_t\), the substitution
\eqref{eq:local-change} gives \(\dd t=2i\eta\,\dd u\).
The image contour \(\Gamma_\eta^{\mathrm{rev}}\) runs from
\((L/\eta)e^{i\pi/3}\) through the origin to
\((L/\eta)e^{-i\pi/3}\).  The exact prefactor is
\[
 \frac{\eta^{1/2}}{2\sqrt\pi}
 \frac1{\sqrt{4\pi\eta^3}}\,(2i\eta)
 =\frac{i}{2\pi}.
\]
Consequently, the contribution of the two sloping pieces is exactly
\begin{equation}
 \mathcal U_{\eps,\mathrm{slope}}^{(0)}(\zeta)
 =
 \frac{i}{2\pi}
 \int_{\Gamma_\eta^{\mathrm{rev}}}
 e^{u^3/3-\zeta u}\mathcal D_\eta(u,\zeta)\,\dd u.
 \label{eq:exact-local-integral}
\end{equation}
Since \(\Gamma_\eta^{\mathrm{rev}}\) has the reverse Airy orientation,
\[
 \frac{i}{2\pi}
 \int_{\Gamma_\eta^{\mathrm{rev}}}
 =
 \frac1{2\pi i}
 \int_{-\Gamma_\eta^{\mathrm{rev}}}.
\]
Thus the leading local integral has the sign and normalization of
\(\Ai\).

It remains to extend the contour to the full Airy rays.  The
normalized phase at \(t=t_0\) is uniformly bounded for
\(\zeta\in K\), so \eqref{eq:scaled-descent} and the boundedness of
the amplitude give
\[
 |\text{normalized integrand}|
 \le C_K e^{-8c|u|^3+2C_K|u|}
\]
on the exact sloping pieces.  Choose \(A\) in
\eqref{eq:growing-window} sufficiently large.  Then
\begin{equation}
 \int_{U_\eta\le|u|\le L/\eta}
 e^{-8c|u|^3+2C_K|u|}\,|\dd u|
 =O_K(\eta^3).
 \label{eq:exact-tail}
\end{equation}
On the complete Airy rays,
\[
 \operatorname{Re}\left(\frac{u^3}{3}-\zeta u\right)
 \le-\frac{|u|^3}{3}+R_K|u|,
\]
so the corresponding tails, even after multiplication by any of the
polynomials in \eqref{eq:local-density}, are also
\(O_K(\eta^3)\).  The fixed outer phase gap
\eqref{eq:outer-gap} gives
\[
 O_K\!\left(\eta^{-1}e^{-g/\eta^3}\right)
 =O_K(\eta^3)
\]
for the two real outer pieces.

On \(|u|\le U_\eta\), insert \eqref{eq:local-density} into
\eqref{eq:normalized-central}.  The remainder is integrable without a
logarithmic loss because
\[
 \int_{\Gamma_{\rm Ai}}
 e^{\operatorname{Re}(u^3/3-\zeta u)}
 (1+|u|^{12})\,|\dd u|
 \le C_K.
\]
Combining \eqref{eq:first-cancellation} and
\eqref{eq:second-correction} with the tail estimates, we arrive at
\begin{equation}
 \mathcal U_\eps^{(0)}(\zeta)
 =
 \Ai(\zeta)
 +\frac{\eta^2}{30}
 \left(4\zeta\Ai(\zeta)+\zeta^2\Ai'(\zeta)\right)
 +O_K(\eta^3).
 \label{eq:central-expansion}
\end{equation}
Finally,
\[
 \mathcal U_\eps
 =
 \mathcal U_\eps^{(0)}
 +\mathcal U_\eps^{(\mathrm{nc})}.
\]
Lemma~\ref{lem:noncentral} shows that the second term is
\(O_K(\eta^3)\).  Since
\(\eta^2=\eps^{2/3}\) and \(\eta^3=\eps\),
\eqref{eq:central-expansion} proves
\eqref{eq:main-expansion}.  Taking the supremum over \(K\) gives the
last assertion of Theorem~\ref{prop:main-expansion}.
\end{proof}

\section{Proof of the zero asymptotics}
\label{sec:zero-proof}

\subsection{The zero corresponding to a fixed Airy zero}

Let
\begin{equation}
 B(\zeta)
 :=
 \frac1{30}
 \left(4\zeta\Ai(\zeta)+\zeta^2\Ai'(\zeta)\right).
 \label{eq:B-correction}
\end{equation}
Fix \(n\), and choose a disk
\[
 D_n=\{\zeta:|\zeta-a_n|<r_n\}
\]
that is invariant under conjugation and contains no Airy zero other
than \(a_n\).

\begin{lemma}[Local zero and displacement]
\label{lem:local-zero}
For all sufficiently small \(\eps\), the function \(\Ue\) has exactly
one zero \(\zeta_{n,\eps}\) in \(D_n\).  The zero is real and simple,
and
\begin{equation}
 \zeta_{n,\eps}
 =
 a_n-\frac{a_n^2}{30}\eta^2+O_n(\eta^3).
 \label{eq:local-zero-shift}
\end{equation}
\end{lemma}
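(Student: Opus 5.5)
We will argue by Rouché's theorem on \(D_n\), followed by a one-step Newton correction, using Theorem~\ref{prop:main-expansion} on the compact set \(K=\overline{D_n}\).

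\emph{Existence and uniqueness.} \(\Ai\) has no zeros on \(\partial D_n\), so \(m_n:=\min_{\partial D_n}|\Ai|>0\). Theorem~\ref{prop:main-expansion} gives \(\sup_{\overline{D_n}}|\Ue-\Ai|\le C\eta^2<m_n\) for small \(\eps\). Rouché's theorem then says that \(\Ue\) has exactly as many zeros in \(D_n\) as \(\Ai\), counted with multiplicity. Since \(a_n\) is a simple zero of \(\Ai\), \(\Ue\) has exactly one zero \(\zeta_{n,\eps}\) in \(D_n\), and that zero is simple.

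\emph{Reality.} For \(\zeta\) real, \(z_\eps(\zeta)\) is real, \(A_q\) has real Taylor coefficients, and \(N_\eps(\zeta)\) is real and positive. Hence \(\Ue(\bar\zeta)=\overline{\Ue(\zeta)}\), because \(N_\eps\) and \(A_q\circ z_\eps\) are entire and real on the real axis. So \(\overline{\zeta_{n,\eps}}\) is also a zero of \(\Ue\). It lies in \(D_n\) because \(D_n\) is conjugation-invariant, and uniqueness forces \(\zeta_{n,\eps}\in\mathbb R\).

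\emph{Displacement.} Write \(\Ue=\Ai+\eta^2B+E_\eps\), where \(\sup_{\overline{D_n}}|E_\eps|=O_n(\eta^3)\) by \eqref{eq:main-expansion}. Because the remainder is only known in sup norm, we first pass to a smaller disk \(D_n'\) of radius \(r_n/2\). Cauchy's estimate gives \(\sup_{D_n'}|E_\eps'|=O_n(\eta^3)\), and also \(\Ue'=\Ai'+O(\eta^2)\) on \(D_n'\).

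The Rouché step above, run with circles of radius \(C\eta^2\) around \(a_n\), already gives \(\delta:=\zeta_{n,\eps}-a_n=O(\eta^2)\). Taylor expand at \(a_n\):
\[
0=\Ue(\zeta_{n,\eps})=\Ai'(a_n)\delta+O(\delta^2)+\eta^2B(a_n)+O(\eta^2\delta)+O(\eta^3).
\]
Solving, and using \(\Ai'(a_n)\ne0\), gives \(\delta=-\eta^2B(a_n)/\Ai'(a_n)+O(\eta^3)\), since \(\delta^2\) and \(\eta^2\delta\) are both \(O(\eta^4)\). Since \(\Ai(a_n)=0\), we have \(B(a_n)=a_n^2\Ai'(a_n)/30\), and therefore \(\delta=-a_n^2\eta^2/30+O_n(\eta^3)\). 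This is \eqref{eq:local-zero-shift}.

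The only delicate point is to differentiate the remainder legitimately, which the Cauchy estimate on the shrunken disk handles. Apart from that, the argument is a routine perturbation of the simple zero \(a_n\).
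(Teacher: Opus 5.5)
Your proof is correct and follows the paper's argument. It uses Rouché on \(D_n\) against \(\Ai\), reality from \(\Ue(\overline\zeta)=\overline{\Ue(\zeta)}\) together with uniqueness in the conjugation-invariant disk, an a priori bound \(\zeta_{n,\eps}-a_n=O(\eta^2)\), and a first-order Taylor expansion at \(a_n\) with \(B(a_n)=a_n^2\Ai'(a_n)/30\).

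The differences are minor. The paper gets the a priori bound from the factorization \(\Ai(\zeta)=(\zeta-a_n)g_n(\zeta)\) with \(\min_{\overline{D_n}}|g_n|>0\), rather than from Rouché on circles of radius \(C\eta^2\). Your Cauchy estimate for \(E_\eps'\) is never actually used: the expansion only needs \(E_\eps(\zeta_{n,\eps})=O(\eta^3)\), which the sup bound already gives.
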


\begin{proof}
The expansion \eqref{eq:main-expansion}, on the closure of a slightly
larger disk, gives
\begin{equation}
 \Ue(\zeta)
 =
 \Ai(\zeta)+\eta^2B(\zeta)+R_\eta(\zeta),
 \qquad
 \sup_{\overline{D_n}}|R_\eta|\le C_n\eta^3.
 \label{eq:zero-local-expansion}
\end{equation}
The Airy zero \(a_n\) is simple: otherwise uniqueness for
\(y''=\zeta y\) would imply \(\Ai\equiv0\).  Hence
\[
 \min_{\partial D_n}|\Ai|>0.
\]
For small \(\eta\), Rouch\'e's theorem applied to
\eqref{eq:zero-local-expansion} produces exactly one zero in
\(D_n\), counted with multiplicity.  Since
\(\Ue(\overline\zeta)=\overline{\Ue(\zeta)}\), uniqueness in the
conjugation-invariant disk makes this zero real.  It is simple because
its multiplicity is one.

Write
\[
 \Ai(\zeta)=(\zeta-a_n)g_n(\zeta),
 \qquad
 \min_{\overline{D_n}}|g_n|>0.
\]
Evaluating \eqref{eq:zero-local-expansion} at \(\zeta_{n,\eps}\)
shows first that \(\zeta_{n,\eps}-a_n=O_n(\eta^2)\).  Taylor
expansion at \(a_n\) then yields
\[
 0
 =
 \Ai'(a_n)(\zeta_{n,\eps}-a_n)
 +\eta^2B(a_n)+O_n(\eta^3).
\]
Since
\[
 B(a_n)=\frac{a_n^2}{30}\Ai'(a_n),
\]
division by \(\Ai'(a_n)\ne0\) proves
\eqref{eq:local-zero-shift}.
\end{proof}

The map
\[
 F_\eps(\zeta)
 =
 \frac{\sqrt q}{4}e^{-\eta^2\zeta}
\]
is one-to-one on \(D_n\) for small \(\eta\).  Indeed,
\(F_\eps(\zeta_1)=F_\eps(\zeta_2)\) would imply
\(\eta^2(\zeta_1-\zeta_2)\in2\pi i\mathbb Z\), which is impossible
for distinct points in a fixed disk when \(\eta\) is small.
The normalizing factor in \(\Ue\) never vanishes, so
\[
 z_{n,\eps}:=F_\eps(\zeta_{n,\eps})
\]
is the unique zero of \(A_q\) in \(F_\eps(D_n)\).  From
\eqref{eq:local-zero-shift},
\begin{align}
 \log(4z_{n,\eps})
 &=
 -\frac{\eta^3}{2}-\eta^2\zeta_{n,\eps}
 \notag\\
 &=
 -a_n\eta^2-\frac{\eta^3}{2}
 +\frac{a_n^2}{30}\eta^4+O_n(\eta^5).
 \label{eq:z-log-expansion}
\end{align}
This proves the two expansions in Corollary~\ref{cor:zeros} for the
locally labeled zero.  It remains to identify its global index.

\subsection{A zero-free interval}

\begin{lemma}
\label{lem:zero-free}
For every \(0<q<1\),
\begin{equation}
 A_q(z)>0,
 \qquad
 0\le z\le\frac1{4q}.
 \label{eq:zero-free}
\end{equation}
\end{lemma}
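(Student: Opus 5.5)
The plan is to avoid the series altogether, since the terms $t_k=q^{k^2}z^k/(q;q)_k$ need not decrease; for instance $t_1/t_0=qz/(1-q)$ exceeds $1$ when $q$ is close to $1$. Instead I will run a discrete Sturm/Riccati argument on the Ramanujan equation \eqref{eq:Ramanujan-equation}, restricted to a geometric progression $x,qx,q^2x,\dots$.

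For real $x\ge 0$ write $f(x)=A_q(x)$. We have $f(0)=1$, and $f$ is continuous, so $f>0$ on some interval $[0,x_0]$. On any interval where $f$ does not vanish, put
\[
 s(x):=\frac{f(x)}{f(qx)} .
\]
Then \eqref{eq:Ramanujan-equation}, written as $f(x)=f(qx)-qxf(q^2x)$, becomes the Riccati-type recursion
\[
 s(x)=1-\frac{qx}{s(qx)} .
\]
Its fixed-point equation $\sigma^2-\sigma+qx=0$ has the real root $\sigma(x)=\tfrac12\bigl(1+\sqrt{1-4qx}\bigr)$ exactly when $qx\le 1/4$, that is, $x\le 1/(4q)$. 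This is why the interval in \eqref{eq:zero-free} appears. Moreover $\sigma(x)\ge 1/2$, $\sigma$ is decreasing in $x$, and $1-qx/\sigma(x)=\sigma(x)$.

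The core step is a comparison. Fix $x\in(0,1/(4q)]$ and consider the points $x_k=q^kx$. For large $M$ the point $x_M$ lies in $[0,x_0]$, so $s(x_M)$ is defined, positive, and equal to $1+O(x_M)$. Likewise $\sigma(x_M)=1+O(x_M)$, so $e_M:=\max\{0,\sigma(x_M)-s(x_M)\}=O(q^M)$.

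Now step down the index from $k$ to $k-1$. The map $s\mapsto 1-qx_{k-1}/s$ is increasing, and $\sigma(x_k)\ge\sigma(x_{k-1})$. Hence, if $s(x_k)\ge\sigma(x_k)-e_k>0$, then
\[
 s(x_{k-1})\;\ge\;1-\frac{qx_{k-1}}{\sigma(x_{k-1})-e_k}
 \;=\;\sigma(x_{k-1})-\frac{qx_{k-1}\,e_k}{\sigma(x_{k-1})\bigl(\sigma(x_{k-1})-e_k\bigr)} .
\]
So $e_{k-1}\le e_k\cdot qx_{k-1}/\bigl(\tfrac12(\tfrac12-e_k)\bigr)$. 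The amplification factor is at most $1/(1-2e_k)$ in general, but it is of size $O(q^{k})$ for most steps. The product of these factors over $k=M,\dots,1$ is therefore bounded by a constant depending only on $q$ and $x$, provided the errors stay small. A simple induction then gives $e_0\le C\,e_M=O(q^M)$, and along the way every $s(x_k)>0$. This positivity is what guarantees that $f(x_{k-1})$ has the same sign as $f(x_k)$, so $f$ is never zero along the chain and $f(x)>0$.

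Letting $M\to\infty$ also yields $s(x)\ge\sigma(x)\ge 1/2$. This stronger bound is not needed for \eqref{eq:zero-free}, but it is a convenient by-product.

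The main obstacle is bookkeeping near the turning point, where $qx_{k-1}/\sigma^2$ approaches $1$ and the comparison is only marginally contractive. I will handle it by first choosing $M$ so large that $e_M$ is tiny. The contraction factors $\le 4qx_{k-1}(1+O(e))$ are summable in the sense that $\prod_k(1+O(q^k))$ converges, so only the last boundedly many steps have factors near $1$, and those cannot blow up a sufficiently small error.

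As an alternative if this becomes delicate, I could prove instead, via the integral representation of Lemma~\ref{lem:gaussian-integral}, that $f>0$ on $[0,x_0]$ for an explicit $x_0$. I would then run the monotone recursion with exact inequalities $s(x_k)\ge\sigma(x_k)$ from a base point where this can be checked directly. The point $z=0$ itself, where $s=\sigma=1$, is the natural limit of such base points.
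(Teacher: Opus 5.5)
Your argument is correct. It uses the same engine as the paper's proof: the Riccati recursion $s_k=1-\alpha_k/s_{k+1}$ for the ratios $s_k=A_q(q^kz)/A_q(q^{k+1}z)$, with $\alpha_k=q^{k+1}z$, run backward from the tail where $s_k\to1$.

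The difference is the inductive hypothesis. The paper uses the fixed interval $[1/2,1]$. Since $\alpha_k\le q^k/4\le1/4$, the hypothesis $s_{k+1}\in[1/2,1]$ gives $1/2\le1-2\alpha_k\le s_k\le1$ exactly. The base case is then automatic because $s_k\to1$, and no error propagation or limit $M\to\infty$ is needed.

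Your moving comparison $\sigma(x_k)$ is a sharpening of this invariant. The constant $1/2$ is just $\sigma$ at $qx=1/4$, the double root of $\sigma^2-\sigma+1/4$, so the threshold $z\le1/(4q)$ enters both proofs in the same way. The ``marginally contractive'' bookkeeping you anticipate is the price of the sharpening, not an intrinsic difficulty. It is also milder than you suggest. Since $4qx_{k-1}=4q^kx\le q^{k-1}$, once $e_k\le(1-q)/2$ every amplification factor with $k\ge2$ is at most $1$. So only the final step can enlarge the error, and by a factor at most $1/q$.

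In exchange, your route gives the stronger ratio bound $A_q(x)\ge\frac12\bigl(1+\sqrt{1-4qx}\bigr)A_q(qx)$ for $0\le x\le1/(4q)$. Your limit $e_0\le Ce_M\to0$ does justify it.
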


\begin{proof}
Fix \(z\) in the stated interval and put
\[
 y_k=A_q(q^kz),
 \qquad
 \alpha_k=q^{k+1}z.
\]
The exact equation \eqref{eq:Ramanujan-equation} gives
\begin{equation}
 y_k=y_{k+1}-\alpha_k y_{k+2}.
 \label{eq:backward-recurrence}
\end{equation}
Since \(q^kz\to0\), one has \(y_k\to A_q(0)=1\).  Thus, for all
sufficiently large \(k\), \(y_k,y_{k+1}>0\) and
\[
 \frac12\le\frac{y_k}{y_{k+1}}\le1.
\]
Indeed, convergence to \(1\) supplies the lower bound and positivity
of three successive tail values, and the recurrence then yields
\(y_k=y_{k+1}-\alpha_k y_{k+2}\le y_{k+1}\), which supplies the upper
bound throughout a sufficiently remote tail.
Suppose
\[
 s_{k+1}:=\frac{y_{k+1}}{y_{k+2}}\in[1/2,1].
\]
Because \(z\le(4q)^{-1}\),
\[
 0\le\alpha_k\le\frac{q^k}{4}\le\frac14.
\]
Dividing \eqref{eq:backward-recurrence} by \(y_{k+1}\) gives
\[
 s_k:=\frac{y_k}{y_{k+1}}
 =1-\frac{\alpha_k}{s_{k+1}},
\]
and therefore
\[
 \frac12\le1-2\alpha_k\le s_k\le1.
\]
Backward induction from the positive tail now yields
\(y_0=A_q(z)>0\).
\end{proof}

For positive \(z\), introduce the decreasing coordinate
\begin{equation}
 \zeta_\eps(z)
 =
 -\eta^{-2}\log\!\left(\frac{4z}{\sqrt q}\right).
 \label{eq:inverse-turning-coordinate}
\end{equation}
The endpoint of \eqref{eq:zero-free} corresponds to
\begin{equation}
 \zeta_\eps\!\left(\frac1{4q}\right)
 =-\frac32\eta.
 \label{eq:zero-free-coordinate}
\end{equation}
Thus \(\Ue(\zeta)\ne0\) for real
\(\zeta\ge-3\eta/2\).

\subsection{Identification of the global index}

\begin{proof}[Completion of the proof of Corollary~\ref{cor:zeros}]
Choose
\[
 b_n\in(a_{n+1},a_n),
 \qquad I_n=[b_n,0].
\]
This interval contains precisely \(a_1,\ldots,a_n\).  Take pairwise
disjoint disks \(D_j\) as in Lemma~\ref{lem:local-zero}.  Remove their
open real diameters from \(I_n\), and denote the remaining compact set
by \(E_n\).  Since \(E_n\) contains no Airy zero,
\[
 m_n:=\min_{\zeta\in E_n}|\Ai(\zeta)|>0.
\]
Theorem~\ref{prop:main-expansion}, applied on \(I_n\), gives
\[
 \sup_{\zeta\in I_n}|\Ue(\zeta)-\Ai(\zeta)|
 \le C_n\eta^2.
\]
For sufficiently small \(\eta\), this is less than \(m_n\).  Hence
\(\Ue\) has no zero on \(E_n\), while
Lemma~\ref{lem:local-zero} produces exactly one real simple zero in
each disk.  Therefore the only real zeros of \(\Ue\) in \(I_n\) are
\[
 \zeta_{1,\eps}>\zeta_{2,\eps}>
 \cdots>\zeta_{n,\eps}.
\]

Since \(b_n<0\), we may also assume
\[
 b_n<-\frac32\eta<0.
\]
If a positive zero \(z\) satisfies
\(0<z\le F_\eps(b_n)\), then Lemma~\ref{lem:zero-free} excludes
\(z\le(4q)^{-1}\).  For
\((4q)^{-1}<z\le F_\eps(b_n)\), the decreasing coordinate
\eqref{eq:inverse-turning-coordinate} and
\eqref{eq:zero-free-coordinate} give
\[
 b_n\le\zeta_\eps(z)<-\frac32\eta<0.
\]
Thus every such zero corresponds to one of the \(n\) zeros just
counted in \(I_n\).  Conversely, for every fixed \(j\le n\),
Lemma~\ref{lem:local-zero} gives
\[
 \zeta_{j,\eps}=a_j+O_j(\eta^2).
\]
Since \(a_j<0\), after decreasing the common small-\(\eta\) threshold
we have \(\zeta_{j,\eps}<-3\eta/2\) for every \(j\le n\).  Therefore
\[
 F_\eps(\zeta_{j,\eps})
 >
 F_\eps(-3\eta/2)
 =
 \frac1{4q},
\]
and, because \(\zeta_{j,\eps}\in I_n\), each of these local zeros lies
in the precise interval \(((4q)^{-1},F_\eps(b_n)]\) being counted.
Since \(F_\eps\) reverses order,
\[
 0<z_{1,\eps}<z_{2,\eps}<\cdots<z_{n,\eps}.
\]
Hence \(z_{n,\eps}\) is exactly the \(n\)-th positive zero of \(A_q\).

Substitution of \(\eta=\eps^{1/3}\) in
\eqref{eq:local-zero-shift} and \eqref{eq:z-log-expansion} proves
\eqref{eq:scaled-zero} and \eqref{eq:zero-exp-form}.  Expanding the
last exponential gives
\[
 e^{-a_n\eta^2-\eta^3/2+a_n^2\eta^4/30+O_n(\eta^5)}
 =
 1-a_n\eta^2-\frac{\eta^3}{2}
 +\frac{8a_n^2}{15}\eta^4+O_n(\eta^5),
\]
which is \eqref{eq:zero-power-form}.  The limits
\eqref{eq:zero-limits} and \eqref{eq:zero-gap-limit} follow
immediately.  For a fixed finite set of indices, all disks, compact
minima, constants, and thresholds form a finite collection, so a
single threshold and error constant may be used.
\end{proof}

\section{The normalized \texorpdfstring{\(q\)}{q}-difference equation}
\label{sec:difference-airy}

The saddle-point argument proves the limit and pins down the
normalization.  The \(q\)-difference equation offers a complementary
account of where the scaling and the limiting Airy equation come from.
A formal continuum limit does not establish convergence on its own, so
we keep the two arguments separate.  The construction of an exact
companion tending to \(\Bi\) is deferred to
\ref{app:Bq-companion}.

\subsection{The exact normalized equation}

For a function \(f\) of the turning-point variable, define
\begin{equation}
 \begin{split}
 (\mathcal L_\eta f)(\zeta)
 &:=
 f(\zeta)
 -2e^{\eta^2\zeta/2+\eta^3/4}f(\zeta+\eta)\\
 &\hspace{30mm}
 +e^{-\eta^3/2}f(\zeta+2\eta).
 \end{split}
 \label{eq:difference-operator}
\end{equation}

\begin{proposition}[Normalized difference equation]
\label{prop:normalized-difference}
The normalized Ramanujan function satisfies the exact identity
\begin{equation}
 \mathcal L_\eta\Ue=0.
 \label{eq:normalized-difference}
\end{equation}
If \(f\) is fixed and holomorphic on a neighborhood of a compact set
\(K\subset\C\), then, uniformly for \(\zeta\in K\),
\begin{align}
 \mathcal L_\eta f
 ={}&
 \eta^2(f''-\zeta f)
 +\eta^3(f'''-\zeta f'-f)
 \notag\\
 &+
 \eta^4\left(
 \frac7{12}f^{(4)}
 -\frac{\zeta}{2}f''
 -\frac32f'
 -\frac{\zeta^2}{4}f
 \right)
 +O_{K,f}(\eta^5).
 \label{eq:difference-expansion}
\end{align}
The constant in the remainder may depend on \(K\), on a fixed
holomorphic neighborhood of \(K\), and on \(f\), but not on
sufficiently small \(\eta>0\).
\end{proposition}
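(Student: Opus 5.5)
The plan has two independent parts: an exact algebraic verification of \(\mathcal L_\eta\Ue=0\), and a Taylor expansion of \(\mathcal L_\eta f\) in \(\eta\) for fixed holomorphic \(f\).

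\emph{Exact identity.} I would rewrite \eqref{eq:Ramanujan-equation} in the turning-point variable. Since \(z_\eps(\zeta)=\frac{\sqrt q}{4}e^{-\eta^2\zeta}\) and \(q=e^{-\eta^3}\), we have \(qz_\eps(\zeta)=z_\eps(\zeta+\eta)\) and \(q^2z_\eps(\zeta)=z_\eps(\zeta+2\eta)\). Writing \(Y(\zeta)=\Aq(z_\eps(\zeta))\), Ramanujan's equation becomes
\[
 Y(\zeta)-Y(\zeta+\eta)+q\,z_\eps(\zeta)\,Y(\zeta+2\eta)=0 .
\]
Next substitute \(Y=\Ue/N_\eps\) and multiply through by \(N_\eps(\zeta)\). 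The identity then requires computing the two ratios \(N_\eps(\zeta)/N_\eps(\zeta+\eta)\) and \(N_\eps(\zeta)/N_\eps(\zeta+2\eta)\) from \eqref{eq:normalization}. The constant part of the exponent cancels, and the linear and quadratic parts give
\[
 \frac{N_\eps(\zeta)}{N_\eps(\zeta+h)}
 =\exp\!\Bigl(\frac{Lh}{2\eta}+\frac{\eta(2\zeta h+h^2)}{4}\Bigr).
\]
For \(h=\eta\) this is \(e^{L/2}e^{\eta^2\zeta/2+\eta^3/4}\). For \(h=2\eta\), multiplied by \(q z_\eps(\zeta)=\frac{e^{-3\eta^3/2}}{4}e^{-\eta^2\zeta}\), it gives \(e^{L}\cdot\frac14 e^{\eta^2\zeta+\eta^3}e^{-3\eta^3/2}e^{-\eta^2\zeta}=e^{-\eta^3/2}\). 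Dividing the whole equation by the common factor \(e^{L/2}=2\) in the middle term (and checking the factor \(2\) appears there), one obtains exactly \(\mathcal L_\eta\Ue=0\). The only care needed is bookkeeping of the constants \(e^{L/2}=2\) and \(e^{L}=4\), and I expect them to match the coefficients \(-2\) and \(1\) in \eqref{eq:difference-operator}.

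\emph{Expansion.} On a fixed neighborhood of \(K\), Cauchy estimates bound all derivatives of \(f\). I would Taylor-expand \(f(\zeta+\eta)\) and \(f(\zeta+2\eta)\) to order \(\eta^4\), with remainder \(O(\eta^5)\) controlled by Cauchy estimates on a slightly larger compact set. I would also expand the two exponential coefficients \(e^{\eta^2\zeta/2+\eta^3/4}\) and \(e^{-\eta^3/2}\) through \(\eta^4\), uniformly for \(\zeta\in K\). Then I collect powers of \(\eta\):
\begin{itemize}
 \item the \(\eta^0\) coefficient is \(1-2+1=0\);
 \item the \(\eta^1\) coefficient is \(-2f'+2f'=0\);
 \item the \(\eta^2\) coefficient is \(-f''+2f''-\zeta f=f''-\zeta f\);
 \item the \(\eta^3\) coefficient combines \(-\tfrac13f'''+\tfrac43f'''\), \(-\zeta f'\), and \(-\tfrac12f-\tfrac12f\).
\end{itemize}
The \(\eta^4\) term is computed the same way, and I expect it to reproduce \(\frac7{12}f^{(4)}-\frac\zeta2f''-\frac32f'-\frac{\zeta^2}4f\).

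\emph{Main obstacle.} The main difficulty is only the error-free collection of the \(\eta^4\) coefficient, in particular the cross terms between the exponential expansions and the Taylor terms. I would carry this out by writing each of the three summands as a polynomial in \(\eta\) truncated at degree \(4\) and checking each coefficient separately. Uniformity of the remainder follows because every neglected term is a product of a bounded derivative of \(f\) on the neighborhood with a bounded power of \(\zeta\in K\).
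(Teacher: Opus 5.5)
Your proposal is correct and is essentially the paper's proof: the paper also uses $qz_\eps(\zeta)=z_\eps(\zeta+\eta)$ and the exact exponent increments of $N_\eps$ ($L/2+\eta^2\zeta/2+\eta^3/4$ and $L+\eta^2\zeta+\eta^3$), and then Taylor-expands, using Cauchy estimates to make the remainder uniform. One small correction: no division by $e^{L/2}$ is needed, because after multiplying by $N_\eps(\zeta)$ the three coefficients are already exactly $1$, $-2e^{\eta^2\zeta/2+\eta^3/4}$ and $e^{-\eta^3/2}$; the $\eta^4$ collection you deferred does close up as claimed, with $-\tfrac1{12}f^{(4)}+\tfrac23f^{(4)}=\tfrac7{12}f^{(4)}$ and $-\tfrac12f'-f'=-\tfrac32f'$.
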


\begin{proof}
The turning-point coordinate converts multiplication of the argument
by \(q\) into translation:
\begin{equation}
 qz_\eps(\zeta)=z_\eps(\zeta+\eta),
 \qquad
 q^2z_\eps(\zeta)=z_\eps(\zeta+2\eta).
 \label{eq:q-shift-zeta}
\end{equation}
Write
\[
 h(\zeta)=\frac{L\zeta}{2\eta}+\frac{\eta\zeta^2}{4}.
\]
By \eqref{eq:normalization},
\[
 A_q(z_\eps(\zeta))
 =
 C_\eps e^{h(\zeta)}\Ue(\zeta)
\]
for a nonzero constant \(C_\eps\) independent of \(\zeta\).  The
increments are exact:
\begin{align*}
 h(\zeta+\eta)-h(\zeta)
 &=
 \frac L2+\frac{\eta^2\zeta}{2}+\frac{\eta^3}{4},\\
 h(\zeta+2\eta)-h(\zeta)
 &=
 L+\eta^2\zeta+\eta^3.
\end{align*}
Moreover,
\[
 qz_\eps(\zeta)
 =
 \frac14e^{-\eta^2\zeta-3\eta^3/2}.
\]
Substitution in the exact Ramanujan equation
\eqref{eq:Ramanujan-equation}, followed by division by
\(C_\eps e^{h(\zeta)}\), gives
\eqref{eq:normalized-difference}.

For a fixed \(f\), Taylor expansion of \(f(\zeta+\eta)\) and
\(f(\zeta+2\eta)\), together with expansion of the two exponential
coefficients in \eqref{eq:difference-operator}, gives
\eqref{eq:difference-expansion}.  Cauchy estimates on a fixed larger
compact set make the Taylor remainder uniform for \(\zeta\in K\).
\end{proof}

\subsection{The continuum equation and the first correction}

After division by \(\eta^2\), the leading term in
\eqref{eq:difference-expansion} is
\begin{equation}
 f''(\zeta)-\zeta f(\zeta)=0.
 \label{eq:airy-from-difference}
\end{equation}
Thus the continuum equation is the Airy equation, with fundamental
solutions \(\Ai\) and \(\Bi\).  The next operator also vanishes on
every Airy solution:
\[
 f'''-\zeta f'-f=(f''-\zeta f)'=0.
\]
This cancellation is a useful check on both the half-step centering
\(\sqrt q\) in \eqref{eq:turning-scaling} and the exponential
normalization in \eqref{eq:normalization}.

The difference equation also checks the form of the first correction.
Let
\[
 \mathcal A f=f''-\zeta f,
\]
and denote the coefficient of \(\eta^4\) in
\eqref{eq:difference-expansion} by \(\mathcal C f\).  If
\(\mathcal A f_0=0\), then the Airy equation and its derivatives give
\[
 \mathcal C f_0
 =
 -\frac13f_0'-\frac{\zeta^2}{6}f_0.
\]
Consequently, for a formal expansion with no order-\(\eta\) term,
\[
 f_\eta=f_0+\eta^2g+O(\eta^3),
\]
the order-\(\eta^4\) equation is
\begin{equation}
 \mathcal A g
 =
 \frac13f_0'+\frac{\zeta^2}{6}f_0.
 \label{eq:correction-forcing}
\end{equation}
A particular solution is
\begin{equation}
 g_p(\zeta)
 =
 \frac1{30}
 \left(4\zeta f_0(\zeta)+\zeta^2f_0'(\zeta)\right).
 \label{eq:correction-particular}
\end{equation}
For \(f_0=\Ai\), this is exactly the correction in
\eqref{eq:main-expansion}.

Two things are left undetermined here.  An order-\(\eta\) term, if
present, would itself have to solve the Airy equation, and \(g_p\)
may be altered by an arbitrary homogeneous Airy solution.  The
difference equation thus confirms the shape of the correction without
fixing its homogeneous part; it is the global saddle analysis that
selects \(f_0=\Ai\), eliminates the order-\(\eta\) term, and determines
the coefficient in \eqref{eq:main-expansion}.

\begin{remark}[Logical role of the continuum limit]
\label{rem:difference-limitation}
The remainder in \eqref{eq:difference-expansion} is uniform for a
fixed holomorphic test function.  Substituting the varying family
\(\Ue\) would require uniform derivative bounds, or some compactness
argument in their place, and the difference equation supplies neither.
The continuum calculation therefore explains the scaling but does not
establish convergence; that comes from
Sections~\ref{sec:main-proof} and \ref{sec:zero-proof}.
\end{remark}

\section{Numerical verification}
\label{sec:numerics}

All values below were computed from the term recurrence
\[
 \frac{T_{k+1}}{T_k}
 =
 \frac{-zq^{2k+1}}{1-q^{k+1}}
\]
with at least 100 decimal digits of working precision.  The
calculations in Tables~\ref{tab:main-rate} and
\ref{tab:zero-scaled} used 180 digits; after the decreasing tail had
been reached, summation stopped when
\(\lvert T_k\rvert<10^{-150}\max(1,\lvert S_k\rvert)\), where \(S_k\)
is the current partial sum.  The computations used Python~3.14.4 and
\texttt{mpmath}~1.3.0.  The zeros underlying
Table~\ref{tab:zero-scaled} were found by the secant implementation of
\texttt{mpmath.findroot}, initialized at
\(\widehat\zeta_{n,\eps}\pm0.04\), where
\[
 \widehat\zeta_{n,\eps}
 =a_n-\frac{a_n^2}{30}\eps^{2/3},
\]
and with tolerance \(10^{-120}\) in the 180-digit calculation.  Those
in Table~\ref{tab:zero-limit} used the same method with starting
displacement \(0.05\) and 100-digit precision.  These computations
check signs, phases, normalizations, and the observed orders of the
remainders; nothing in the proofs depends on them.

\subsection{The \texorpdfstring{\(A_q\)}{Aq}-to-
\texorpdfstring{\(\Ai\)}{Ai} limit and first correction}

Let
\[
 C_A(\zeta)
 :=
 \frac1{30}
 \left(4\zeta\Ai(\zeta)+\zeta^2\Ai'(\zeta)\right).
\]
Table~\ref{tab:Ai-limit} gives the normalized values and the scaled
errors.  The last row records the limits predicted by
\eqref{eq:main-expansion}.

\begin{table}[htbp]
\centering
\small
\begin{tabular}{@{}ccccc@{}}
\toprule
\(\eps\)
& \(\Ue(1)\)
& \(\dfrac{\Ue(1)-\Ai(1)}{\eps^{2/3}}\)
& \(\Ue(-1)\)
& \(\dfrac{\Ue(-1)-\Ai(-1)}{\eps^{2/3}}\)\\
\midrule
0.200 & 0.1400811683 &  0.01400240 & 0.5130736722 & -0.06575300\\
0.100 & 0.1382662297 &  0.01380322 & 0.5211478602 & -0.06689933\\
0.050 & 0.1371410776 &  0.01362105 & 0.5263536385 & -0.06783956\\
0.025 & 0.1364434153 &  0.01346217 & 0.5296949752 & -0.06860808\\
\midrule
limit
& \(\Ai(1)=0.1352924163\)
& \(C_A(1)=0.0127341\)
& \(\Ai(-1)=0.5355608833\)
& \(C_A(-1)=-0.0717468\)\\
\bottomrule
\end{tabular}
\caption{Normalized Ramanujan function and the first correction in
\eqref{eq:main-expansion}.}
\label{tab:Ai-limit}
\end{table}

That table uses real \(\zeta\) only.  To test the complex
normalization and both orders in \eqref{eq:main-expansion}, take the
finite set
\[
 {\cal S}=\{-1,0,1,i,1+i\}
\]
and define
\begin{align*}
 E_0(\eps)
 &:=
 \max_{\zeta\in{\cal S}}
 |\Ue(\zeta)-\Ai(\zeta)|,\\
 E_1(\eps)
 &:=
 \max_{\zeta\in{\cal S}}
 |\Ue(\zeta)-\Ai(\zeta)-\eps^{2/3}C_A(\zeta)|.
\end{align*}
For successive halvings of \(\eps\), let
\[
 p_j(\eps)
 =
 \log_2\frac{E_j(2\eps)}{E_j(\eps)}.
\]
The expansion then predicts \(p_0(\eps)\to2/3\) and
\(p_1(\eps)\to1\), provided the corresponding leading coefficients
do not vanish on the test set.

\begin{table}[htbp]
\centering
\small
\begin{tabular}{@{}ccccc@{}}
\toprule
\(\eps\) & \(E_0(\eps)\) & \(p_0(\eps)\)
& \(E_1(\eps)\) & \(p_1(\eps)\)\\
\midrule
0.2000 & \(2.4605735\times10^{-2}\) & ---
       & \(2.0498505\times10^{-3}\) & ---\\
0.1000 & \(1.5370678\times10^{-2}\) & 0.67881
       & \(1.0443571\times10^{-3}\) & 0.97290\\
0.0500 & \(9.6291810\times10^{-3}\) & 0.67470
       & \(5.3029458\times10^{-4}\) & 0.97775\\
0.0250 & \(6.0431045\times10^{-3}\) & 0.67212
       & \(2.6835729\times10^{-4}\) & 0.98264\\
0.0125 & \(3.7968985\times10^{-3}\) & 0.67047
       & \(1.3541616\times10^{-4}\) & 0.98676\\
\midrule
predicted order & & \(2/3\) & & \(1\)\\
\bottomrule
\end{tabular}
\caption{Observed errors and convergence orders on a test set of real
and nonreal turning-point variables.  The orders in
\eqref{eq:main-expansion} are tested on this finite set only.}
\label{tab:main-rate}
\end{table}

For a direct check of the complex phase, at \(\zeta=i\) and
\(\eps=0.0125\) the computation gives
\[
 \frac{\Ue(i)-\Ai(i)}{\eps^{2/3}}
 =
 0.0583836303+0.0395074128i,
\]
whereas
\[
 C_A(i)=0.0567430699+0.0409308455i.
\]

\subsection{Zhang's exponential regime}

Table~\ref{tab:Zhang-limit} illustrates \eqref{eq:Zhang-exponential}
at \(w=1,2\), and shows numerically that the exponential and Airy
limits arise from different argument scalings.

\begin{table}[htbp]
\centering
\small
\begin{tabular}{@{}ccccc@{}}
\toprule
\(\eps\)
& \(A_q(1-q)\)
& error from \(e^{-1}\)
& \(A_q(2(1-q))\)
& error from \(e^{-2}\)\\
\midrule
0.200 & 0.3946384132 & \(2.676\times10^{-2}\)
      & 0.1018246093 & \(3.351\times10^{-2}\)\\
0.100 & 0.3789010474 & \(1.102\times10^{-2}\)
      & 0.1198722970 & \(1.546\times10^{-2}\)\\
0.050 & 0.3728986815 & \(5.019\times10^{-3}\)
      & 0.1280853421 & \(7.250\times10^{-3}\)\\
0.025 & 0.3702795241 & \(2.400\times10^{-3}\)
      & 0.1318324678 & \(3.503\times10^{-3}\)\\
\midrule
limit & \(e^{-1}=0.3678794412\) & 0
      & \(e^{-2}=0.1353352832\) & 0\\
\bottomrule
\end{tabular}
\caption{Numerical check of Ruiming Zhang's regular origin scaling,
with \(q=e^{-\eps}\).}
\label{tab:Zhang-limit}
\end{table}

\subsection{Fixed-index positive zeros}

Table~\ref{tab:zero-limit} compares the computed positive zero with
the two-term approximation obtained by omitting the \(O_n(\eps^{5/3})\)
term in \eqref{eq:zero-exp-form}.

\begin{table}[htbp]
\centering
\small
\begin{tabular}{@{}ccccc@{}}
\toprule
\(n\) & \(\eps\) & numerical \(\rho_n\)
& two-term approximation & relative error\\
\midrule
1 & 0.100 & 0.3969878219 & 0.3968859734 & \(2.566\times10^{-4}\)\\
1 & 0.050 & 0.3360339315 & 0.3360122192 & \(6.461\times10^{-5}\)\\
1 & 0.025 & 0.3019358521 & 0.3019309538 & \(1.622\times10^{-5}\)\\
\addlinespace
2 & 0.100 & 0.5895550563 & 0.5887717785 & \(1.329\times10^{-3}\)\\
2 & 0.050 & 0.4291787718 & 0.4290364421 & \(3.316\times10^{-4}\)\\
2 & 0.025 & 0.3516479279 & 0.3516188224 & \(8.277\times10^{-5}\)\\
\bottomrule
\end{tabular}
\caption{The first two positive zeros and
\eqref{eq:zero-exp-form}.  Here
\(a_1=-2.338107410459767\) and
\(a_2=-4.087949444130971\).}
\label{tab:zero-limit}
\end{table}

Halving \(\eps\) in Table~\ref{tab:zero-limit} decreases the relative
errors by a factor close to \(4\), consistent with the
\(O_n(\eps^{5/3})\) remainder in the exponent, though the data alone
do not establish a sharper estimate.

One can check the displacement coefficient and the limit formulas
without fitting \(\rho_n\) directly.  Put
\[
 D_n(\eps)
 =
 \frac{\zeta_{n,\eps}-a_n}{\eps^{2/3}},
 \qquad
 S_n(\eps)
 =
 \frac{4\rho_n(e^{-\eps})-1}{\eps^{2/3}},
\]
and
\[
 G(\eps)
 =
 \frac{\rho_2(e^{-\eps})-\rho_1(e^{-\eps})}{\eps^{2/3}}.
\]
Corollary~\ref{cor:zeros} gives
\[
 D_n(\eps)\longrightarrow-\frac{a_n^2}{30},
 \qquad
 S_n(\eps)\longrightarrow-a_n,
 \qquad
 G(\eps)\longrightarrow\frac{a_1-a_2}{4}.
\]

\begin{table}[htbp]
\centering
\small
\begin{tabular}{@{}cccccc@{}}
\toprule
\(\eps\) & \(D_1(\eps)\) & \(D_2(\eps)\)
& \(S_1(\eps)\) & \(S_2(\eps)\) & \(G(\eps)\)\\
\midrule
0.1000 & -0.187753 & -0.585687 & 2.729028 & 6.304300 & 0.893818\\
0.0500 & -0.185733 & -0.575051 & 2.535614 & 5.280802 & 0.686297\\
0.0250 & -0.184444 & -0.568367 & 2.429782 & 4.755526 & 0.581436\\
0.0125 & -0.183627 & -0.564168 & 2.371657 & 4.466421 & 0.523691\\
\midrule
limit  & -0.182225 & -0.557044 & 2.338107 & 4.087949 & 0.437461\\
\bottomrule
\end{tabular}
\caption{Numerical verification of the scaled zero displacement,
the leading zero limits, and the first scaled gap.  The limiting row
is, respectively,
\(-a_1^2/30\), \(-a_2^2/30\), \(-a_1\), \(-a_2\), and
\((a_1-a_2)/4\).}
\label{tab:zero-scaled}
\end{table}

The displacement columns already lie close to their predicted limits.
The convergence of \(S_2\) and \(G\) is slower at the displayed
values of \(\eps\), as is expected from the lower-order terms produced
when the exponential formula \eqref{eq:zero-exp-form} is expanded.

\section{Concluding remarks}
\label{sec:discussion}

Ismail's work established the role of \(A_q\) as an edge function in
fixed-\(q\), large-degree asymptotics for \(q\)-orthogonal
polynomials, and Ruiming Zhang then obtained the regular limit at the
origin together with fixed-\(q\) and \(q\uparrow1\) theta-type
asymptotics for large arguments
\cite{ZhangTheta,ZhangPR2008,ZhangScaled,ZhangFunctions2012}.
Neither line of work addresses a bounded turning point as
\(q\uparrow1\).

Li and Wong's formula \eqref{eq:LiWong} is an Airy approximation for
\(A_q\) itself, with \(x>1/4\) fixed, and the formal expansion in
Subsection~\ref{sec:li-wong} agrees with \eqref{eq:main-expansion}
through the first correction.  Hamamoto, Kajiwara and Witte obtained a
turning-point Airy limit, uniform on compact subsets, for the distinct
Kajiwara \(q\)-Airy function; combined with Morita's connection
formula, their result also yields the leading \(\Ai\)-limit for
\(A_q\).  The analysis here provides the direct saddle derivation for
\(A_q\), the first correction with a
quantitative remainder, and the consequences for the zeros.

Two limitations should be kept in mind.  The comparison with Li and
Wong is formal, since their theorem is not uniform as
\(x\downarrow1/4\); and \eqref{eq:zero-exp-form} is proved for each
fixed \(n\), so it does not meet the fixed-\(q\), large-index
results of Ismail--Changgui Zhang or Dai--Ismail--Wang.

Taken together, the two scalings show that the limiting behavior of a
\(q\)-analogue of Airy depends on the function, on the limiting
parameter, and on the scaling of the argument in equal measure.  For
\(A_q\) the regular scaling at the origin produces an exponential,
while the coalescing-saddle scaling at \(z=1/4\) produces the
classical Airy function, uniformly on compact subsets of the
turning-point variable, and with it the asymptotics of the
fixed-index positive zeros.  \ref{app:Bq-companion} records how
Morita's connection formula ties this limit to the Kajiwara
\(q\)-Airy function and supplies a second solution tending to
\(\Bi\).

\clearpage
\appendix
\renewcommand{\thesection}{Appendix \Alph{section}}
\renewcommand{\thesubsection}{\Alph{section}.\arabic{subsection}}

\section{\texorpdfstring{\(B_q\)}{Bq}-companion from Morita's connection formula}
\label{app:Bq-companion}

The second-solution construction is collected here so as not to
interrupt the main line of argument; none of it is used in the proofs
of Theorem~\ref{prop:main-expansion} or Corollary~\ref{cor:zeros}.

\subsection{Morita's connection formula and a
\texorpdfstring{\(B_q\)}{Bq}-companion}

Let \(q=p^2\), where \(0<p<1\), and use the theta convention
\begin{equation}
 \theta_p(x)
 =
 \sum_{k\in\mathbb Z}p^{k(k-1)/2}x^k,
 \qquad
 \theta_p(px)=x^{-1}\theta_p(x).
 \label{eq:theta-convention}
\end{equation}
Put
\[
 D_p=(p,-1;p)_\infty=2(p^2;p^2)_\infty
\]
and let
\[
 \operatorname{Ai}_p(y)
 =
 \phioneone\!\left(
 \begin{matrix}0\\-p\end{matrix};p,-y
 \right).
\]
Morita's exact connection formula
\cite[Theorem, p.~3]{Morita2011} is
\begin{equation}
 A_{p^2}\!\left(-\frac{p^3}{x^2}\right)
 =
 \frac{
 \theta_p(x/p)\operatorname{Ai}_p(-x)
 +\theta_p(-x/p)\operatorname{Ai}_p(x)}
 {D_p}.
 \label{eq:Morita-connection}
\end{equation}
On the square-root cover
\[
 \mathcal S_p
 =
 \left\{(z,x)\in(\C^\ast)^2:x^2=-\frac{p^3}{z}\right\},
\]
define the antisymmetric companion
\begin{equation}
 B_q^{\mathrm{Mor}}(z,x)
 :=
 \frac{i}{D_p}
 \left[
 \theta_p(x/p)\operatorname{Ai}_p(-x)
 -\theta_p(-x/p)\operatorname{Ai}_p(x)
 \right].
 \label{eq:Morita-B}
\end{equation}

We will also need an explicit descent of this solution from the cover.
Put
\begin{equation}
 r_\eta=e^{-4\pi^2/\eta^3},
 \qquad
 \lambda_\eta=r_\eta^{1/4},
 \qquad
 H_\eta(w)
 =
 \frac{\theta_{r_\eta}(-\lambda_\eta w)}
 {\theta_{r_\eta}(\lambda_\eta w)}.
 \label{eq:dual-theta-H}
\end{equation}
Let \(x_{0,\eta}=-2ip\).  If
\(\ell\in\log(x/x_{0,\eta})\), set
\begin{equation}
 h_\eta(x)
 :=
 H_\eta\!\left(
 \exp\!\left(\frac{4\pi i\ell}{\eta^3}\right)
 \right),
 \qquad
 B_q^{\mathrm{sv}}(z)
 :=
 h_\eta(x)B_q^{\mathrm{Mor}}(z,x),
 \quad x^2=-\frac{p^3}{z}.
 \label{eq:single-valued-B}
\end{equation}
The superscript records only that this normalization is single-valued;
Subsection~\ref{sec:selection} explains why there is no canonical
\(B_q\) to which it could refer.

\begin{proposition}[A same-equation companion with a \(\Bi\)-limit]
\label{prop:Morita-Bi}
Let
\[
 p=e^{-\eta^3/2},\qquad q=p^2=e^{-\eta^3},
\]
and choose the entire turning-point lift
\begin{equation}
 z_\eta(\zeta)=\frac p4e^{-\eta^2\zeta},
 \qquad
 x_\eta(\zeta)=-2ip\,e^{\eta^2\zeta/2}.
 \label{eq:Morita-turning-branch}
\end{equation}
Then \(B_q^{\mathrm{Mor}}\) satisfies the Ramanujan equation on each
lifted \(q\)-orbit in \(\mathcal S_p\) and is odd under
\(x\mapsto-x\).  Formula \eqref{eq:single-valued-B} is independent of
the choices of \(\ell\) and \(x\); it defines a meromorphic function
on \(\C^\ast\) satisfying
\[
 qzB_q^{\mathrm{sv}}(q^2z)
 -B_q^{\mathrm{sv}}(qz)
 +B_q^{\mathrm{sv}}(z)=0.
\]
For every compact \(K\subset\C\), uniformly for \(\zeta\in K\),
\begin{align}
 N_\eps(\zeta)
 B_q^{\mathrm{Mor}}(z_\eta(\zeta),x_\eta(\zeta))
 &=
 \Bi(\zeta)+O_K(\eta^2),
 \label{eq:Morita-Bi-limit}\\
 N_\eps(\zeta)
 B_q^{\mathrm{sv}}(z_\eta(\zeta))
 &=
 \Bi(\zeta)+O_K(\eta^2).
 \label{eq:single-valued-Bi-limit}
\end{align}
For either normalized companion, the resulting pair of solutions of
\(\mathcal L_\eta f=0\) is linearly independent over the field
\[
 \mathcal M_\eta
 :=
 \{c\in\mathcal M(\C):c(\zeta+\eta)=c(\zeta)\},
\]
where \(\mathcal M(\C)\) is the field of meromorphic functions on
\(\C\).  The independence holds for all sufficiently small \(\eta\).
\end{proposition}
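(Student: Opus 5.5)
The plan is to verify the algebraic statements first, straight from \eqref{eq:Morita-connection} and the theta functional equation, and then to obtain the analytic limits from the Hamamoto--Kajiwara--Witte formulas \eqref{eq:HKW-one}--\eqref{eq:HKW-two}.

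\emph{Algebraic part.} On $\mathcal S_p$ the substitution $z\mapsto qz=p^2z$ corresponds to $x\mapsto x/p$ on a lifted orbit. Morita's proof of \eqref{eq:Morita-connection} rests on the fact that each product $\theta_p(\pm x/p)\operatorname{Ai}_p(\mp x)$ solves the Ramanujan equation along such an orbit. This follows from \eqref{eq:Aiq-equation} with base $p$ together with $\theta_p(px)=x^{-1}\theta_p(x)$, and I will recheck it directly. Any linear combination of the two products is then a solution, in particular \eqref{eq:Morita-B}. Replacing $x$ by $-x$ interchanges the two products, which makes $B_q^{\mathrm{Mor}}$ odd.

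For the descent, put $r=r_\eta$ and $\lambda=\lambda_\eta$. From $\theta_r(rw)=w^{-1}\theta_r(w)$ one gets $H_\eta(rw)=-H_\eta(w)$, and hence $H_\eta(r^2w)=H_\eta(w)$. The ambiguities of the construction affect $h_\eta$ as follows.
\begin{itemize}
\item Changing $\ell$ by $2\pi i$ multiplies the argument of $H_\eta$ by $e^{-8\pi^2/\eta^3}=r^2$, so $h_\eta$ is unchanged.
\item Replacing $x$ by $-x$ changes $\ell$ by $\pm i\pi$, which multiplies the argument by $r^{\pm1}$ and flips the sign of $h_\eta$. This exactly compensates the oddness of $B_q^{\mathrm{Mor}}$.
\item The shift $z\mapsto qz$ changes $\ell$ by $\eta^3/2$, which multiplies the argument by $e^{2\pi i}=1$.
\end{itemize}
Therefore $B_q^{\mathrm{sv}}$ is a well-defined function of $z$ alone. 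It is meromorphic on $\C^\ast$, since $H_\eta$ is a ratio of theta functions and the other factors are holomorphic in $x$. It also inherits the Ramanujan equation, because $h_\eta$ is invariant along $q$-orbits.

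\emph{Limits.} Take $\delta=\eta$, so that $Q=p$. Then match $\pm x_\eta(\zeta)=\mp 2ip\,e^{\eta^2\zeta/2}$ with the arguments $\mp QX$, where $X=-2ie^{-s\delta^2/2}$. This forces $s=-\zeta$, up to tracking which of \eqref{eq:HKW-one}, \eqref{eq:HKW-two} goes with which sign. The asymptotics of $\theta_p(\pm x_\eta/p)$ follow from the Jacobi imaginary transformation. With $\log p=-\eta^3/2$, each theta factor becomes a Gaussian $\exp\{(\log(\pm x/p))^2/\eta^3\}$ times $\sqrt{2\pi/\eta^3}$, up to exponentially small errors; here $\log(\pm x/p)=\log 2\pm i\pi/2+\eta^2\zeta/2$.

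Multiplying by the HKW factors, the terms of order $\eta^{-3}$ and $\eta^{-1}$ should combine into $N_\eps(\zeta)^{-1}$. The remaining bounded phases should be $e^{\pm i\pi/6}$ multiplying $\Ai(\zeta e^{\pm 2\pi i/3})$. Then the identities
\[
\Ai(\zeta)=-\omega\Ai(\omega\zeta)-\omega^2\Ai(\omega^2\zeta),\qquad \Bi(\zeta)=e^{i\pi/6}\Ai(\omega\zeta)+e^{-i\pi/6}\Ai(\omega^2\zeta),\qquad \omega=e^{2\pi i/3},
\]
give $\Ai$ for the symmetric combination, which reproduces the Remark's consistency check, and $\Bi$ for $i$ times the antisymmetric one. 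This proves \eqref{eq:Morita-Bi-limit} with the HKW error $O_K(\eta^2)$.

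For \eqref{eq:single-valued-Bi-limit}, take $\ell=\eta^2\zeta/2$. Then $w=\exp(2\pi i\zeta/\eta)$, so $|\lambda w|\le e^{-\pi^2/\eta^3+C_K/\eta}$ and $r/|\lambda w|\le e^{-3\pi^2/\eta^3+C_K/\eta}$. Both theta series are therefore $1+O(e^{-c/\eta^3})$ uniformly on $K$. Hence $h_\eta=1+O(e^{-c/\eta^3})$ there, and no zeros or poles enter.

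\emph{Independence.} For a second-order linear difference equation in $\zeta$ with step $\eta$, two solutions are dependent over $\mathcal M_\eta$ if and only if their Casoratian vanishes identically. I will write the Casoratian of $f=\Ue$ and $g$ (either normalized companion) as $f(\zeta)\bigl(g(\zeta+\eta)-g(\zeta)\bigr)-g(\zeta)\bigl(f(\zeta+\eta)-f(\zeta)\bigr)$. For both $f$ and $g$ the errors in the limits are $O_K(\eta^2)$; these come from Theorem~\ref{prop:main-expansion} and from the previous step, with Cauchy estimates giving the same bound for derivatives on a slightly smaller set. It follows that the Casoratian equals $\eta\,W(\Ai,\Bi)+O(\eta^2)=\eta/\pi+O(\eta^2)$ at, say, $\zeta=0$, which is nonzero for small $\eta$.

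\emph{Main obstacle.} I expect the hard step to be the bookkeeping in the limits: fixing the branches of the theta Gaussians and the correct correspondence $s\leftrightarrow\pm\zeta$ in the two HKW formulas, so that the constant phases come out exactly as $e^{\pm i\pi/6}$ and the explicit exponentials cancel $N_\eps$. I would check this first at $\zeta=0$ numerically, then carry it out symbolically.
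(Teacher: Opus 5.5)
Your overall route is the paper's. The sheared equation for the products $\theta_p(\pm x/p)\operatorname{Ai}_p(\mp x)$, oddness by interchange, the descent through $H_\eta(r_\eta w)=-H_\eta(w)$ and $H_\eta(r_\eta^2w)=H_\eta(w)$ (with the $q$-shift acting by $e^{2\pi i}=1$), the bound $h_\eta=1+O(e^{-c/\eta^3})$ on the turning branch, and the Casoratian at $\zeta=0$ are all correct and match what the paper does. The limit step, however, does not close as you have written it: three of the formulas you state there are wrong, and they sit exactly in the bookkeeping you deferred.

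First, your theta asymptotics are off by an $O(1)$ factor. For $\theta_p(y)=\sum_k p^{k(k-1)/2}y^k$ with $p=e^{-\eta^3/2}$, Poisson summation gives
\[
 \theta_p(y)=\frac{2\sqrt\pi}{\eta^{3/2}}\sum_{m\in\mathbb Z}\exp\left\{\frac{(\Log y+\eta^3/4-2\pi i m)^2}{\eta^3}\right\}.
\]
So the dominant term is $\sqrt{4\pi/\eta^3}\,y^{1/2}e^{\eta^3/16}\exp\{(\Log y)^2/\eta^3\}$, not $\sqrt{2\pi/\eta^3}\exp\{(\Log y)^2/\eta^3\}$. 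The half-power comes from the linear part of $k(k-1)/2$. At $y=\pm x_\eta/p$ it equals $\sqrt2\,e^{\mp i\pi/4}(1+O(\eta^2))$, because $\Log(\pm x_\eta/p)=\log2\mp i\pi/2+\eta^2\zeta/2$. Your sign is reversed there, and with it the $\pm i\pi\log2/\eta^3$ phases of the theta Gaussian and of the HKW factor would add instead of cancelling. This half-power is what turns the HKW constants $e^{\mp\pi i/12}$ into $e^{\mp\pi i/3}$.

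Second, you never estimate $1/D_p$, and it is indispensable. Together, $N_\eps$, the theta Gaussian and the HKW factor leave $e^{-\pi^2/(6\eta^3)}$ and $\eta^{-3/2}$ uncancelled. This is exactly the size of $D_p=2(q;q)_\infty=2\sqrt{2\pi}\,\eta^{-3/2}e^{-\pi^2/(6\eta^3)+\eta^3/24}(1+O(e^{-4\pi^2/\eta^3}))$, which comes from the modular transformation of the Dedekind eta function. So the theta and HKW factors combine into $N_\eps^{-1}D_p$ times a bounded phase, not into $N_\eps^{-1}$.

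Third, your target phases are wrong. The normalized products $N_\eps D_p^{-1}\theta_p(\pm x_\eta/p)\operatorname{Ai}_p(\mp x_\eta)$ tend, with error $O_K(\eta^2)$, to $e^{-\pi i/3}\Ai(\omega\zeta)$ and $e^{\pi i/3}\Ai(\omega^2\zeta)$ respectively. Their sum is $\Ai$. Only after multiplying them by $\pm i$ do the phases $e^{\pm i\pi/6}$ appear, and that combination gives $\Bi$. If the products themselves carried $e^{\pm i\pi/6}$, Morita's symmetric combination would tend to $\Bi$, contradicting Theorem~\ref{prop:main-expansion}. With these three corrections your computation becomes the paper's.
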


\begin{proof}
We first verify the exact equation.  If
\(F=\operatorname{Ai}_p\), then
\[
 F(p^2y)+yF(py)-F(y)=0.
\]
Using this equation together with
\eqref{eq:theta-convention}, a direct substitution shows that each of
\[
 T_{1,p}(x)
 =
 \theta_p(x/p)F(-x),
 \qquad
 T_{2,p}(x)
 =
 \theta_p(-x/p)F(x)
\]
satisfies
\begin{equation}
 G(p^2x)-G(px)-\frac{p}{x^2}G(x)=0.
 \label{eq:sheared-Ramanujan}
\end{equation}
For example,
\(\theta_p(x)=p\theta_p(x/p)/x\),
\(\theta_p(px)=p\theta_p(x/p)/x^2\), and
\(F(-p^2x)=F(-x)+xF(-px)\), which give
\eqref{eq:sheared-Ramanujan} for \(T_{1,p}\); the second identity is
similar.  If \(z=-p^3/x^2\), then the consistent lifts of \(qz\) and
\(q^2z\) are \(x/p\) and \(x/p^2\).  Applying
\eqref{eq:sheared-Ramanujan} at \(x/p^2\) gives
\[
 G(x)-G(x/p)-\frac{p^5}{x^2}G(x/p^2)=0,
\]
which is the Ramanujan equation because \(qz=-p^5/x^2\).
Moreover,
\(T_{1,p}(-x)=T_{2,p}(x)\), so
\begin{equation}
 B_q^{\mathrm{Mor}}(z,-x)
 =-B_q^{\mathrm{Mor}}(z,x).
 \label{eq:Morita-odd}
\end{equation}

We next prove the limit, uniformly on compact sets.  Write
\[
 a=\log2,\qquad
 \omega=e^{2\pi i/3},\qquad
 X_\eta(\zeta)=\frac{x_\eta(\zeta)}p
 =-2i e^{\eta^2\zeta/2}.
\]
For \(\zeta\) in a fixed compact set and small \(\eta\), the
principal logarithms are
\[
 \Log X_\eta
 =a-\frac{\pi i}{2}+\frac{\eta^2\zeta}{2},
 \qquad
 \Log(-X_\eta)
 =a+\frac{\pi i}{2}+\frac{\eta^2\zeta}{2}.
\]
The points \(X_\eta(\zeta)\) and \(-X_\eta(\zeta)\) remain a positive
uniform distance from the principal logarithmic cut
\((-\infty,0]\); hence the displayed principal-logarithm formulas
hold uniformly for \(\zeta\in K\).
Equations \eqref{eq:HKW-one}--\eqref{eq:HKW-two}, with
\(Q=p\), \(\delta=\eta\), and \(s=-\zeta\), give
\begin{align}
 \operatorname{Ai}_p(pX_\eta)
 &=
 2\sqrt\pi\,\eta^{-1/2}
 \exp\!\left(
 -\frac{\pi ia}{\eta^3}
 -\frac{\pi i\zeta}{2\eta}
 +\frac{\pi i}{12}
 \right)
 \left[\Ai(\omega^2\zeta)+O_K(\eta^2)\right],
 \label{eq:scaled-Aip-plus}\\
 \operatorname{Ai}_p(-pX_\eta)
 &=
 2\sqrt\pi\,\eta^{-1/2}
 \exp\!\left(
 \frac{\pi ia}{\eta^3}
 +\frac{\pi i\zeta}{2\eta}
 -\frac{\pi i}{12}
 \right)
 \left[\Ai(\omega\zeta)+O_K(\eta^2)\right].
 \label{eq:scaled-Aip-minus}
\end{align}

Poisson summation applied to \eqref{eq:theta-convention} gives the
exact identity
\begin{equation}
 \theta_p(y)
 =
 \frac{2\sqrt\pi}{\eta^{3/2}}
 \sum_{m\in\mathbb Z}
 \exp\!\left\{
 \frac{(\Log y+\eta^3/4-2\pi im)^2}{\eta^3}
 \right\}.
 \label{eq:theta-Poisson}
\end{equation}
For the two logarithms above, the \(m=0\) term is dominant and every
other term is smaller in modulus by \(O_K(e^{-c/\eta^3})\) for some
\(c>0\).  If
\[
 A_-=a-\frac{\pi i}{2},
 \qquad
 A_+=a+\frac{\pi i}{2},
\]
then, uniformly on \(K\),
\begin{align}
 \theta_p(X_\eta)
 &=
 \frac{2\sqrt\pi}{\eta^{3/2}}
 \exp\!\left\{
 \frac{A_-^2}{\eta^3}
 +\frac{A_-\zeta}{\eta}
 +\frac{A_-}{2}
 +\frac{\eta\zeta^2}{4}
 +\frac{\eta^2\zeta}{4}
 +\frac{\eta^3}{16}
 \right\}
 \left(1+O_K(e^{-c/\eta^3})\right),
 \label{eq:theta-X-minus}\\
 \theta_p(-X_\eta)
 &=
 \frac{2\sqrt\pi}{\eta^{3/2}}
 \exp\!\left\{
 \frac{A_+^2}{\eta^3}
 +\frac{A_+\zeta}{\eta}
 +\frac{A_+}{2}
 +\frac{\eta\zeta^2}{4}
 +\frac{\eta^2\zeta}{4}
 +\frac{\eta^3}{16}
 \right\}
 \left(1+O_K(e^{-c/\eta^3})\right).
 \label{eq:theta-X-plus}
\end{align}
The modular transformation of the Dedekind eta function
\cite[\href{https://dlmf.nist.gov/23.18.E5}{(23.18.5)}]{NIST:DLMF}
gives
\begin{equation}
 D_p
 =
 2\sqrt{2\pi}\,\eta^{-3/2}
 \exp\!\left(
 -\frac{\pi^2}{6\eta^3}+\frac{\eta^3}{24}
 \right)
 \left(1+O(e^{-4\pi^2/\eta^3})\right).
 \label{eq:Dp-asymptotic}
\end{equation}

Define the two normalized branches
\[
 W_{1,\eta}
 =
 \frac{N_\eps}{D_p}
 \theta_p(X_\eta)\operatorname{Ai}_p(-pX_\eta),
 \qquad
 W_{2,\eta}
 =
 \frac{N_\eps}{D_p}
 \theta_p(-X_\eta)\operatorname{Ai}_p(pX_\eta).
\]
Since \(L=2a\), substitution of
\eqref{eq:scaled-Aip-plus}--\eqref{eq:Dp-asymptotic} cancels all real
exponentials and all phases of orders \(\eta^{-3}\) and
\(\eta^{-1}\).  The remaining phases and algebraic factors are
\begin{align}
 W_{1,\eta}(\zeta)
 &=
 e^{-\pi i/3}
 e^{\eta^2\zeta/4+\eta^3/48}
 \left[\Ai(\omega\zeta)+O_K(\eta^2)\right],
 \label{eq:W1-limit}\\
 W_{2,\eta}(\zeta)
 &=
 e^{\pi i/3}
 e^{\eta^2\zeta/4+\eta^3/48}
 \left[\Ai(\omega^2\zeta)+O_K(\eta^2)\right].
 \label{eq:W2-limit}
\end{align}
The Airy connection identities
\begin{align*}
 \Ai(\zeta)
 &=
 e^{-\pi i/3}\Ai(\omega\zeta)
 +e^{\pi i/3}\Ai(\omega^2\zeta),\\
 \Bi(\zeta)
 &=
 i\left[
 e^{-\pi i/3}\Ai(\omega\zeta)
 -e^{\pi i/3}\Ai(\omega^2\zeta)
 \right]
\end{align*}
now show respectively that the symmetric combination in
\eqref{eq:Morita-connection} tends to \(\Ai\), while the
antisymmetric combination \eqref{eq:Morita-B} satisfies
\eqref{eq:Morita-Bi-limit}.

It remains to justify the single-valued descent.  From
\(\theta_r(rw)=w^{-1}\theta_r(w)\), equation
\eqref{eq:dual-theta-H} gives
\begin{equation}
 H_\eta(r_\eta w)=-H_\eta(w),
 \qquad
 H_\eta(r_\eta^2w)=H_\eta(w).
 \label{eq:H-quasiperiods}
\end{equation}
Changing \(\ell\) by \(2\pi ik\) multiplies its exponential in
\eqref{eq:single-valued-B} by \(r_\eta^{2k}\).  To make the
meromorphicity assertion precise, first choose \(\ell\) on a simply
connected logarithmic chart in \(\C^\ast_x\).  The resulting local
function is meromorphic because it is a quotient of theta functions.
On the overlap of two charts their logarithms differ by \(2\pi ik\),
and the second identity in \eqref{eq:H-quasiperiods} makes the two
local functions equal.  They therefore glue to a single-valued
meromorphic function \(h_\eta\) on \(\C^\ast_x\).

For the convention \eqref{eq:theta-convention}, the Jacobi product
shows that \(\theta_r(y)=0\) exactly when \(y=-r^m\),
\(m\in\mathbb Z\).  Hence the only possible poles of \(h_\eta\) occur
at the denominator zeros
\begin{equation}
 \exp\!\left(\frac{4\pi i\ell}{\eta^3}\right)
 =
 -\lambda_\eta^{-1}r_\eta^m,
 \qquad m\in\mathbb Z,
 \label{eq:h-poles}
\end{equation}
where the condition is independent of the logarithmic chart.  Since
\(\log p=-\eta^3/2\), \eqref{eq:H-quasiperiods} also gives
\[
 h_\eta(px)=h_\eta(x),
 \qquad
 h_\eta(-x)=-h_\eta(x).
\]
Together with \eqref{eq:Morita-odd}, the second identity shows that
the product in \eqref{eq:single-valued-B} is invariant under the deck
transformation \(x\mapsto-x\).  It therefore descends from
\(\mathcal S_p\) to a meromorphic function of \(z\in\C^\ast\).
Its poles can occur only among the projections of
\eqref{eq:h-poles}; the Morita factor itself is holomorphic on the
cover.

The first identity also yields, meromorphically,
\[
 h_\eta(x/p)=h_\eta(x)=h_\eta(x/p^2).
\]
Thus the same factor multiplies all three terms when
\eqref{eq:sheared-Ramanujan} is applied at \(x/p^2\).  Multiplication
by \(h_\eta\) preserves that equation, and the correspondence
\(z=-p^3/x^2\) then proves the exact Ramanujan equation for
\(B_q^{\mathrm{sv}}\).

On the turning-point branch, take
\(\ell=\eta^2\zeta/2\).  Then
\[
 \exp\!\left(\frac{4\pi i\ell}{\eta^3}\right)
 =e^{2\pi i\zeta/\eta}.
\]
If \(y_\eta=\lambda_\eta e^{2\pi i\zeta/\eta}\) and
\(M_K=\sup_{\zeta\in K}|\operatorname{Im}\zeta|\), then
\[
 |y_\eta|
 \le
 e^{-\pi^2/\eta^3+2\pi M_K/\eta},
 \qquad
 \frac{r_\eta}{|y_\eta|}
 \le
 e^{-3\pi^2/\eta^3+2\pi M_K/\eta}.
\]
The defining bilateral series gives, whenever both quantities are
small,
\[
 \theta_r(\pm y)
 =
 1+O\!\left(|y|+\frac r{|y|}\right).
\]
Consequently the numerator and denominator in
\eqref{eq:dual-theta-H} are nonzero on every scaled compact set for
small \(\eta\), and
\[
 h_\eta(x_\eta(\zeta))
 =
 1+O_K(e^{-c/\eta^3}).
\]
This estimate and \eqref{eq:Morita-Bi-limit} prove
\eqref{eq:single-valued-Bi-limit}.

Finally, let \(V_\eta\) denote either normalized companion in
\eqref{eq:Morita-Bi-limit}--\eqref{eq:single-valued-Bi-limit}.
Both \(\Ue\) and \(V_\eta\) satisfy \(\mathcal L_\eta f=0\).
Local uniform convergence on a slightly larger compact set implies
convergence of their \(\zeta\)-derivatives on a smaller one.  Consider
the discrete Casoratian
\[
 \mathcal C_{\eta,V}(\zeta)
 =
 \Ue(\zeta)V_\eta(\zeta+\eta)
 -
 \Ue(\zeta+\eta)V_\eta(\zeta).
\]
At the origin,
\begin{align*}
 \frac{\mathcal C_{\eta,V}(0)}{\eta}
 &=
 \Ue(0)\frac{V_\eta(\eta)-V_\eta(0)}{\eta}
 -
 V_\eta(0)\frac{\Ue(\eta)-\Ue(0)}{\eta}\\
 &\longrightarrow
 \Ai(0)\Bi'(0)-\Ai'(0)\Bi(0)
 =
 \frac1\pi.
\end{align*}
Hence \(\mathcal C_{\eta,V}(0)\ne0\) for all sufficiently small
\(\eta\).  More generally, if
\(a\Ue+bV_\eta=0\) with \(a,b\in\mathcal M_\eta\) and
\(b\not\equiv0\), then \(V_\eta=-(a/b)\Ue\), so the displayed
Casoratian would vanish identically.  If \(b\equiv0\), then
\(a\Ue=0\), and \(\Ue\not\equiv0\) implies \(a\equiv0\).
Its nonvanishing therefore proves independence over
\(\mathcal M_\eta\), the constant field natural to the translated
difference operator.
\end{proof}

\subsection{How \texorpdfstring{\(A_q\)}{Aq} and a
\texorpdfstring{\(B_q\)}{Bq}-companion are selected}
\label{sec:selection}

Equation \eqref{eq:Ramanujan-equation} is of second order along a
\(q\)-orbit, just as \eqref{eq:airy-from-difference} is a
second-order differential equation.  There is nevertheless an
important difference at the origin.  Substitution of a Taylor series
in \eqref{eq:Ramanujan-equation} determines all its coefficients from
the constant term and reproduces \eqref{eq:Aq-def}.  Hence the space
of solutions holomorphic at \(z=0\) is one-dimensional, and \(A_q\)
is its distinguished element.  Its integral representation and
global contour select the recessive limit \(\Ai\).

No such normalization is available for a second solution.  At the
irregular singular point a second independent local solution is
represented by a divergent basic hypergeometric series, and a
\(q\)-Borel--Laplace or connection/Stokes prescription is needed
before one has an analytic meromorphic solution
\cite{Morita2012,Morita2014}; consequently, there is no
normalization-independent function universally denoted \(B_q\).  The
companion used here is Morita's antisymmetric combination
\(B_q^{\mathrm{Mor}}\) in \eqref{eq:Morita-B}, which solves the same
Ramanujan equation on the natural square-root cover; the branch and
normalization in \eqref{eq:Morita-turning-branch} then select the
limit \(\Bi\), as in \eqref{eq:Morita-Bi-limit}.  We therefore call
it a \(B_q\)-companion rather than the \(B_q\).

The rotated expression \eqref{eq:rotated-B} recovers \(\Bi\) from the
already normalized \(A_q\) more directly, but for fixed \(\eps\) it
need not satisfy the original \(q\)-difference equation, and should
not be confused with Morita's exact second solution.

\subsection{A rotated \texorpdfstring{\(\Bi\)}{Bi}-companion}

The rotated combination
\begin{equation}
 \mathcal B_\eps^{\mathrm{rot}}(\zeta)
 =
 e^{-\pi i/6}\Ue(e^{4\pi i/3}\zeta)
 +e^{\pi i/6}\Ue(e^{2\pi i/3}\zeta)
 \label{eq:rotated-B}
\end{equation}
also converges to \(\Bi(\zeta)\), uniformly on compact subsets, by the
classical Airy connection identity.  Unlike \eqref{eq:Morita-B}, it
need not satisfy the Ramanujan equation for fixed \(\eps\).

\subsection{The Morita \texorpdfstring{\(\Bi\)}{Bi}-companion}

Table~\ref{tab:Bi-limit} evaluates
\[
 \mathcal B_\eta^{\mathrm{Mor}}(1)
 :=
 N_\eps(1)
 B_q^{\mathrm{Mor}}(z_\eps(1),x_\eta(1)).
\]
The scaled error remains bounded and varies slowly as \(\eps\)
decreases, in agreement with the \(O(\eta^2)\) estimate in
\eqref{eq:Morita-Bi-limit}.

\begin{table}[htbp]
\centering
\small
\begin{tabular}{@{}ccc@{}}
\toprule
\(\eps\)
& \(\mathcal B_\eta^{\mathrm{Mor}}(1)\)
& \(\dfrac{\mathcal B_\eta^{\mathrm{Mor}}(1)-\Bi(1)}
{\eps^{2/3}}\)\\
\midrule
0.200 & 1.2790680589 & 0.20948968\\
0.100 & 1.2516520490 & 0.20529030\\
0.050 & 1.2348711896 & 0.20223561\\
0.025 & 1.2245196566 & 0.19995675\\
\midrule
limit & \(\Bi(1)=1.2074235950\) & bounded\\
\bottomrule
\end{tabular}
\caption{Numerical evidence for the Morita companion
\(N_\eps B_q^{\mathrm{Mor}}\to\Bi\).}
\label{tab:Bi-limit}
\end{table}

\end{document}